\newtheorem{theorem}{Theorem}
\newtheorem{lemma}{Lemma}
\theoremstyle{remark}
\def\Om{\Omega}
\def\e{\varepsilon}
\def\g{\gamma}
\def\G{\Gamma}
\def\l{\lambda}
\def\p{\partial}
\def\D{\Delta}
\def\a{\alpha}
\def\b{\beta}
\def\d{\delta}
\def\z{\zeta}
\def\di{\,d}
\def\iu{\mathrm{i}}
\def\He{\mathcal{H}_\e}
\def\he{\mathfrak{h}_\e}
\def\Ho{\mathcal{H}_0}
\def\ho{\mathfrak{h}_0}
\def\Wo{\mathring{W}_2}
\def\A{\mathcal{A}}
 \DeclareMathOperator{\RE}{Re}
\DeclareMathOperator{\IM}{Im} \DeclareMathOperator{\spec}{\sigma}
\DeclareMathOperator{\discspec}{\sigma_{d}}
\DeclareMathOperator{\essspec}{\sigma_{e}}
\DeclareMathOperator{\Dom}{\mathcal{D}}
\numberwithin{equation}{section}
\begin{document}

\title{Creation of spectral bands \\ for a periodic  domain with small windows}

\author{\firstname{D.I.}~\surname{Borisov}}

\email{BorisovDI@yandex.ru}

\affiliation{Institute of Mathematics CC USC RAS \& Bashkir State Pedagogical University \& University of Hradec Kr\'alov\'e}

\begin{abstract}
We consider a Schr\"odiner operator in a periodic system of strip-like domains coupled by small windows. As the windows close, the domain decouples into an infinite series of identical domains.  The operator similar to the original one but on one copy of these identical domains has an essential spectrum. We show that once there is a virtual level at the threshold of this essential spectrum, the windows turns this virtual level into the spectral bands for the original operator. We study the structure and the asymptotic behavior of these bands.
\end{abstract}

\maketitle

\section{Introduction}

The paper is devoted to a Schr\"odinger operator subject to the Dirichlet condition in a periodic system of unbounded domains coupled by small windows, see Fig. 1.   Once the windows close, the domain decouples in an infinite series of identical domains. The spectrum of the original perturbed operator converges to that of the similar operator but in the aforementioned decoupled domain. This limiting set is the spectrum of the similar operator on the periodicity cell of the decoupled domain.  Each point in the limiting spectrum generates a band in the spectrum of the original operator. And our  aim is to study the structure of such bands.

The models similar to ours were studied in the series of papers \cite{BRT}, \cite{Na3}, \cite{Na5}, \cite{Pan}, \cite{Yo}, \cite{RJMP15}. Under the assumption that the periodicity cell is a bounded domain, in \cite{BRT}, \cite{Na3}, \cite{Na5}, \cite{Pan}, \cite{Yo} there were obtained either estimates for the bands or  asymptotics for the band functions. The asymptotics were obtained under the assumption that the limiting spectral point is a simple isolated eigenvalue of the limiting operator in the periodicity cell. In \cite{RJMP15} the periodicity cell was supposed to be either bounded or unbounded. The limiting spectral point was a multiple isolated eigenvalue. For the band functions converging to this limiting eigenvalue, the asymptotics were obtained. Thanks to these asymptotics, it was found that typically, a multiple limiting eigenvalue generates as many bands as its multiplicity is. It was also shown that for one of the bands, the associated band function attains its maximum or/and minimum inside the Brillouin zone neither at the center nor at the end-points.

\begin{figure}[t]
\includegraphics[scale=0.5]{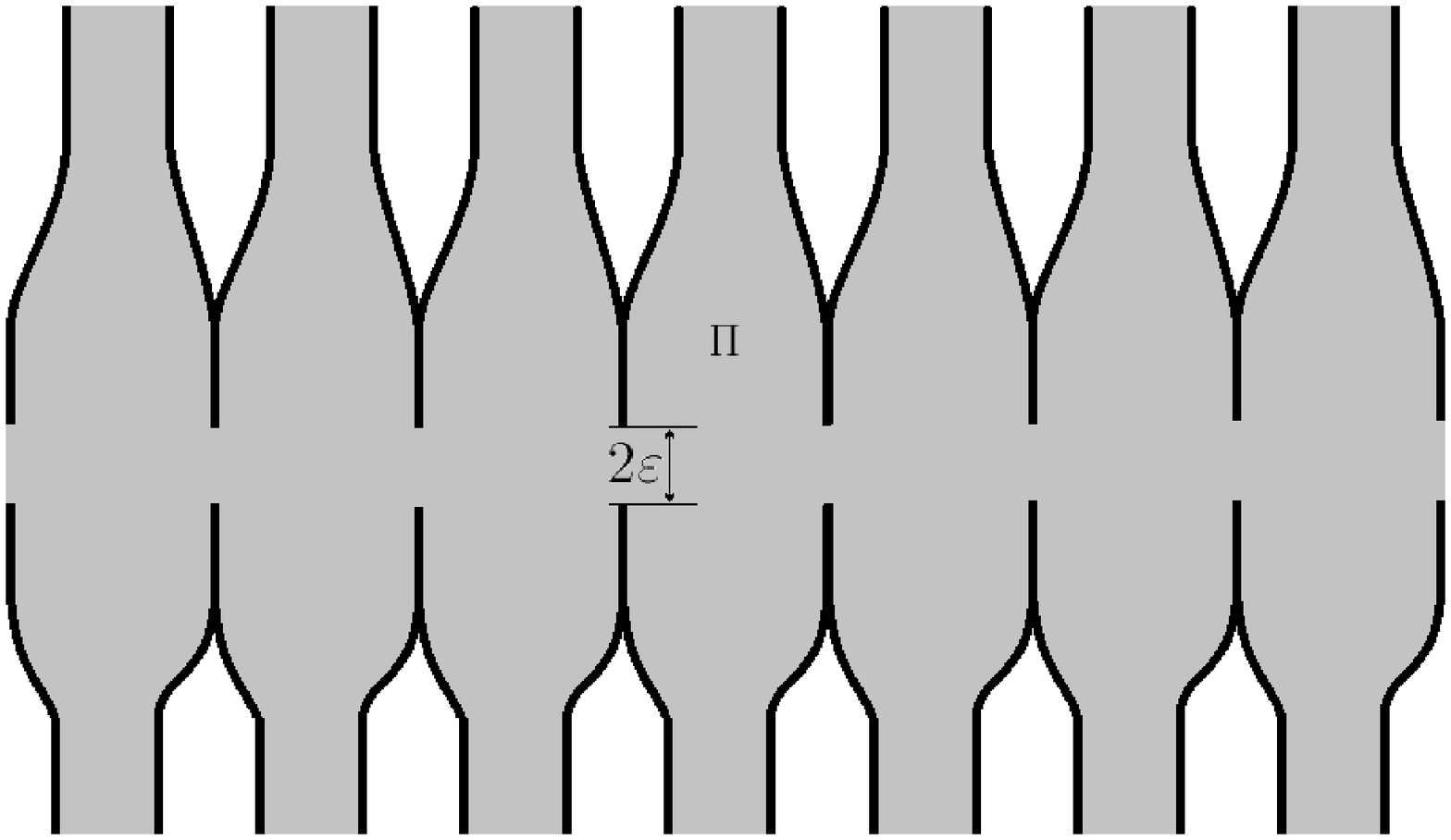}
\caption{Domain}
\end{figure}

The present paper can be regarded as a continuation of \cite{RJMP15}. Here the periodicity cell is assumed to be a strip-like domain. The operator on this domain has an essential spectrum which is invariant w.r.t. the windows. As the windows open, from the threshold of the essential spectrum there can emerge additional band functions generating extra spectral bands for the perturbed operator. We prove that it is indeed the case provided there is a virtual level at the threshold of the limiting operator. We describe the asymptotic behavior of the emerging bands. We also show that if the multiplicity of the virtual level is two, i.e., there are two non-trivial associated resonance solutions, they generate two separate bands in the vicinity of the threshold of the essential spectrum. And the band function associated with one of these bands attains its minimum and/or maximum at the internal points of the Brillouin zone.

This is a completely new phenomenon in comparison with the results in the above cited papers. The main difference is that there the bands shrank to single points which were the isolated eigenvalues of the limiting operator on the periodicity cell. In our case the studied bands disappear in the essential spectrum of the limiting operator on the periodicity cell, i.e., in the limit, there are even no single points. To the best of the author's knowledge, such phenomenon was not studied before.

\section{Problem and main results}

Let $x=(x_1,x_2)$ be Cartesian coordinates in $\mathds{R}^2$, $\Pi$
be an unbounded domain in $\mathds{R}^2$ with $C^3$-boundary possessing the following properties:

\begin{itemize}
\item Domain $\Pi$ lies between the lines $x_1=0$ and $x_1=d$, $d>0$, i.e., $\Pi\subseteq\{x: \, 0<x_1<d\}$.

\item For some $x_2^\infty>0$,, domain $\Pi$ has to straight outlets in the region $|x_2|>x_2^\infty$:
    \begin{equation*}
    \Pi\cap\{x:\, \pm x_2>x_2^\infty\}=\{x:\, a_\pm<x_1<a_\pm+1,\ |x_2|>x_2^\infty\},
    \end{equation*}
    where $a_\pm$  are some fixed constant satisfying $0\leqslant a_\pm<a_\pm+1\leqslant d$.

\item For some $x_2^0>0$, as $|x_2|>x_2^0$, the boundary of $\Pi$ is formed by two vertical segments:
    \begin{equation*}
    \p\Pi\cap\{x:\, |x_2|<x_2^0\}=\{x:\, x_1=0,\, |x_2|<x_2^0\}\cup \{x:\, x_1=d,\, |x_2|<x_2^0\}.
    \end{equation*}
\end{itemize}

By $\Om$ we denote the domain obtained by translations of $\Pi$ along axis $Ox_1$:
\begin{equation*}
\Om:=\bigcup\limits_{k\in\mathds{Z}} \{x:\, (x_1-kd,x_2)\in\Pi\}.
\end{equation*}
This domain is periodic along axis $Ox_1$ and the periodicity cells have common boundaries, a part of them are the vertical segments $\{x:\, x_1=kd,\, |x_2|<x_2^0\}$. We couple these cells by a periodic set of small windows; the obtained domain is denoted by $\Om_\e$:
\begin{equation*}
\Om_\e:=\Om\cup\bigcup\limits_{k\in\mathds{Z}} \{x:\, x_1=kd,\,|x_2|<\e\}.
\end{equation*}

The main object of our study is the Schr\"odinger operator
\begin{equation*}
\He:=-\D + V
\end{equation*}
in $\Om_\e$ subject to the Dirichlet condition. Here  potential $V$ is supposed to be infinitely differentiable in $\overline{\Om}$ and $1$-periodic w.r.t. $x_1$. We also assume that  $V(x)\equiv0$ as $|x_2|>x_2^\infty$.

Rigorously we introduce $\He$ as the self-adjoint operator in $L_2(\Om_\e)$ associated with the lower-semibounded closed symmetric sesquilinear form
\begin{equation*}
\he(u,v):=\big(\nabla u,\nabla v\big)_{L_2(\Om_\e)} + (Vu,v)_{L_2(\Om_\e)}
\end{equation*}
in $L_2(\Om_\e)$ on the domain $\Wo^1(\Om_\e)$. Hereinafter, for a given domain $Q$, the symbol $\Wo^j(Q)$, $j=1,2$, stands for the subspace of Sobolev space $W_2^j(Q)$ consisting of the functions with zero trace on $\p Q$. For the similar subspace of the functions with zero trace on a curve $S\subset S\subset \overline{Q}$, we shall employ the symbol $\Wo^j(Q,S)$.

In this paper, we study the structure of the spectrum of $\He$ as $\e\to+0$. Our first main result states the norm resolvent convergence for $\He$ as $\e\to+0$, and, in particular, it implies the convergence of the spectrum. The limiting operator is
\begin{equation*}
\Ho:=-\D+V
\end{equation*}
in $\Om$ subject to the Dirichlet condition. It is also introduced as associated with a sesquilinear form; the latter reads as
\begin{equation*}
\ho(u,v):=\big(\nabla u,\nabla v\big)_{L_2(\Om)} + (Vu,v)_{L_2(\Om)}
\end{equation*}
in $L_2(\Om)$ on the domain $\Wo^1(\Om)$.

By $\|\cdot\|_{X\to Y}$ we denote the norm of a bounded operator acting from a Hilbert space $X$ into a Hilbert space $Y$.

Now we are in position to formulate our first main result.

\begin{theorem}\label{th2.1}
For sufficiently small $\e$, the estimate
\begin{equation*}
\|(\He-\iu)^{-1}-(\Ho-\iu)^{-1}\|_{L_2(\Om)\to W_2^1(\Om_\e)} \leqslant C\e|\ln\e|^{\frac{1}{2}},
\end{equation*}
holds true, where $C$ is a constant independent of $\e$, $\iu$ is the imaginary unit.
\end{theorem}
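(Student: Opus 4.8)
The plan is to compare the two resolvents by testing the corresponding integral identities against a carefully chosen test function and then estimating the mismatch near the windows. Fix $f\in L_2(\Om)$, and set $u_\e:=(\He-\iu)^{-1}f$, $u_0:=(\Ho-\iu)^{-1}f$; here $u_0$ is extended by zero to $\Om_\e\setminus\Om$ so that both functions live on $\Om_\e$, though $u_0$ only belongs to $\Wo^1(\Om)$ and in general fails to lie in $\Wo^1(\Om_\e)$ because its trace on the window segments $\{x_1=kd,\ |x_2|<\e\}$ need not vanish. The first step is therefore to construct a small corrector: a function $w_\e\in\Wo^1(\Om_\e)$, supported in a fixed neighbourhood of the union of the windows, which matches the trace of $u_0$ on each window and satisfies $\|w_\e\|_{W_2^1(\Om_\e)}\leqslant C\e|\ln\e|^{1/2}\|f\|_{L_2(\Om)}$. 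Such a bound is exactly the capacity-type estimate for a slit of half-width $\e$: a $W_2^1$ function on $\Om$ has, by the trace theorem and interpolation, $L_2$-norm on the segment $|x_2|<\e$ of order $\e^{1/2}|\ln\e|^{1/2}$ times its $W_2^1$-norm, and the standard logarithmic cut-off in two dimensions converts this boundary data into an extension whose Dirichlet energy carries the stated factor. I would build $w_\e$ explicitly as (trace of $u_0$) times a logarithmic cut-off $\chi_\e$ equal to $1$ on the slit and supported in a disc of fixed radius, using the periodicity to do this identically in each cell.

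Next I would write $v_\e:=u_0+w_\e\in\Wo^1(\Om_\e)$ and use it as a legitimate competitor in the form identity for $\He$. Subtracting the identity $\he(u_\e,\phi)-\iu(u_\e,\phi)=(f,\phi)$ from the identity satisfied by $u_0$ on $\Om$ (tested against the same $\phi\in\Wo^1(\Om_\e)$, after noting that $u_0$ solves $(-\D+V-\iu)u_0=f$ in $\Om$ in the distributional sense and integrating by parts over $\Om$, which produces only a boundary term on the window segments since the rest of $\p\Om_\e$ coincides with $\p\Om$) gives
\begin{equation*}
\he(u_\e-v_\e,\phi)-\iu(u_\e-v_\e,\phi)=-\he(w_\e,\phi)+\iu(w_\e,\phi)+\mathcal{B}_\e(\phi),
\end{equation*}
where $\mathcal{B}_\e(\phi)$ collects the boundary term $\int \p_\nu u_0\,\bar\phi$ over the windows. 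The boundary term is controlled by the same capacity estimate applied to $\phi$: since $\p_\nu u_0$ is an $H^{1/2}$-type datum on a fixed curve while $\phi$ restricted to the slit has $L_2$-norm $\lesssim \e^{1/2}|\ln\e|^{1/2}\|\phi\|_{W_2^1}$, one gets $|\mathcal{B}_\e(\phi)|\leqslant C\e^{1/2}|\ln\e|^{1/2}\|f\|_{L_2}\|\phi\|_{W_2^1}$ — which is weaker than claimed, so here I would instead exploit that $\p_\nu u_0$ itself vanishes to first order, or more precisely use the elliptic regularity $u_0\in W_2^2$ near the segments together with the fact that the two one-sided normal derivatives enter with the geometry of the slit, to recover the full power $\e|\ln\e|^{1/2}$. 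The terms with $w_\e$ are immediately bounded by $\|w_\e\|_{W_2^1}\leqslant C\e|\ln\e|^{1/2}\|f\|_{L_2}$.

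Taking $\phi=u_\e-v_\e$ in the above identity and using the lower-semiboundedness of $\he$ together with the fact that $\IM(\he(\phi,\phi)-\iu\|\phi\|^2)=-\|\phi\|^2$ gives coercivity of the form $\he(\cdot,\cdot)-\iu(\cdot,\cdot)$ on $\Wo^1(\Om_\e)$ with respect to the $W_2^1$-norm, uniformly in $\e$ (this is where the shift by $\iu$ and the lower semiboundedness of $V$ are used). Hence $\|u_\e-v_\e\|_{W_2^1(\Om_\e)}\leqslant C\e|\ln\e|^{1/2}\|f\|_{L_2}$, and combining with $\|w_\e\|_{W_2^1}\leqslant C\e|\ln\e|^{1/2}\|f\|_{L_2}$ yields $\|u_\e-u_0\|_{W_2^1(\Om_\e)}\leqslant C\e|\ln\e|^{1/2}\|f\|_{L_2}$, which is the assertion. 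The main obstacle is the sharp constant in the boundary/capacity estimate: a naive trace bound only gives $\e^{1/2}|\ln\e|^{1/2}$, and upgrading to the full $\e|\ln\e|^{1/2}$ requires using that $u_0$ satisfies the Dirichlet condition on the rest of the vertical segment so that its normal derivative on the tiny window is genuinely small, i.e. a careful local analysis near each window point (a boundary-layer / matched-asymptotics argument, or a refined use of the explicit harmonic profile of the slit) rather than a soft Sobolev estimate. The uniformity of all constants in $\e$ follows from periodicity, since everything reduces to a single cell.
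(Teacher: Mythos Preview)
Your proposal has a basic misreading of the geometry that cascades through the argument. You write that $u_0\in\Wo^1(\Om)$ ``fails to lie in $\Wo^1(\Om_\e)$ because its trace on the window segments \ldots\ need not vanish.'' In fact the window segments $\{x_1=kd,\ |x_2|<\e\}$ are contained in $\p\Om$, so the Dirichlet condition defining $\Wo^1(\Om)$ forces $u_0$ to vanish there from \emph{both} sides. Hence the two one-sided traces agree (both are zero), $u_0$ glues to a genuine $W_2^1(\Om_\e)$ function, and since $\p\Om_\e\subset\p\Om$ we even have $u_0\in\Wo^1(\Om_\e)$. Your corrector $w_\e$ is therefore identically zero, and the whole first paragraph of the construction is vacuous.

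Once this is corrected, what survives of your plan is the identity $\he(u_\e-u_0,\phi)-\iu(u_\e-u_0,\phi)=\mathcal{B}_\e(\phi)$ with $\mathcal{B}_\e(\phi)=\sum_k\int_{\g_\e^k}[\p_\nu u_0]\,\overline{\phi}$, and then coercivity. This route \emph{can} be pushed through, but not by the mechanism you suggest: the normal derivative $\p_\nu u_0$ does \emph{not} vanish to first order on the window --- it is generically of size $O(1)$ there --- so ``$\p_\nu u_0$ is genuinely small'' is false. The correct sharpening is purely a matter of scaling plus $W_2^2$-regularity. One needs (i) a Poincar\'e-type trace bound $\|\phi\|_{L_2(\g_\e^k)}\leqslant C\e^{1/2}\|\nabla\phi\|_{L_2(B_\e^k)}$ for $\phi\in\Wo^1(\Om_\e)$ (this uses that $\phi$ vanishes on the rest of the vertical segment), and (ii) $\|\p_\nu u_0\|_{L_2(\g_\e^k)}\leqslant C(\e|\ln\e|)^{1/2}\|u_0\|_{W_2^2}$, which follows from the ``almost-$L_\infty$'' embedding of $\nabla u_0\in W_2^1$ in two dimensions. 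Combining (i) and (ii) gives $|\mathcal{B}_\e(\phi)|\leqslant C\e|\ln\e|^{1/2}\|f\|_{L_2}\|\phi\|_{W_2^1}$, and coercivity finishes. These two local estimates are exactly the content of the paper's Lemma~3.1, equations (3.2) and (3.5), specialized to traces.

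By contrast, the paper avoids boundary integrals altogether: it multiplies $u_0$ by $1-\chi_\e$, where $\chi_\e$ is a cut-off supported in balls of radius $3\e$ about the window centers, so that $(1-\chi_\e)u_0\in\Wo^1(\Om_\e)$ trivially, and then estimates the commutator $g_\e=-2\nabla\chi_\e\cdot\nabla u_0-u_0\Delta\chi_\e$ as a bulk term using Lemma~3.1 directly. Both routes rest on the same local scaling estimates, but the paper's version is cleaner because it never touches traces on the moving segment $\g_\e^k$.
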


This theorem implies immediately the convergence for the spectrum of $\He$ \cite[Ch. V\!I\!I\!I, Sec. 7, Ths. V\!I\!I\!I.23, V\!I\!I\!I.24]{RS}.

\begin{theorem}\label{th2.2}
As $\e\to+0$, the spectrum of  operator $\He$ converges to that of $\Ho$. Namely, if $\lambda$ is not in the spectrum of $\Ho$, for sufficiently small $\e$ the same is true for $\He$. And if $\lambda$ is in the spectrum of $\Ho$, for each $\e$ there exists $\lambda_\e$ in the spectrum of $\He$ such that $\lambda_\e\to\lambda$ as $\e\to+0$.
\end{theorem}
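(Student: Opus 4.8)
The plan is to derive Theorem \ref{th2.2} as a direct consequence of the norm resolvent convergence established in Theorem \ref{th2.1}, using the standard general fact that norm resolvent convergence implies convergence of spectra in the sense described. The key point is to transfer the operators $\He$ and $\Ho$, which act in different Hilbert spaces $L_2(\Om_\e)$ and $L_2(\Om)$, to a common space so that the abstract theorems \cite[Ths. V\!I\!I\!I.23, V\!I\!I\!I.24]{RS} apply. Since $\Om\subset\Om_\e$ and the two domains differ only by the small windows whose measure tends to zero, the natural identification is via the orthogonal projection $P_\e: L_2(\Om_\e)\to L_2(\Om)$ given by restriction, together with its adjoint, the extension-by-zero operator. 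First I would check that under this identification the estimate in Theorem \ref{th2.1} indeed reads as norm resolvent convergence of $\He$ to $\Ho$ in $L_2(\Om)$, since the right-hand side $C\e|\ln\e|^{1/2}\to 0$ and the $W_2^1$-norm dominates the $L_2$-norm.

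Next I would invoke the abstract results. For the first assertion: if $\lambda\notin\spec(\Ho)$, then $(\Ho-\lambda)^{-1}$ is a bounded operator, and the resolvent identity together with the convergence $\|(\He-\iu)^{-1}-(\Ho-\iu)^{-1}\|\to 0$ (lifted to the common space) shows that $(\He-\lambda)^{-1}$ exists and is bounded for $\e$ small, hence $\lambda\notin\spec(\He)$; more directly, Theorem V\!I\!I\!I.24 of \cite{RS} states precisely that the resolvent set is, in the appropriate sense, lower semicontinuous — a point outside the limiting spectrum stays outside for small $\e$. For the second assertion: if $\lambda\in\spec(\Ho)$, one uses the characterization of spectrum via approximate eigenvectors (Weyl sequences). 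Pick $\psi\in\Dom(\Ho)$ with $\|\psi\|=1$ and $\|(\Ho-\lambda)\psi\|$ small; then $\phi_\e:=(\He-\iu)^{-1}(\Ho-\iu)\psi$, compared with $\psi$ via Theorem \ref{th2.1}, provides an approximate eigenvector for $\He$ at a nearby point, yielding $\lambda_\e\in\spec(\He)$ with $\lambda_\e\to\lambda$. This is exactly the content of \cite[Th. V\!I\!I\!I.23]{RS}.

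The one genuinely non-routine step is the bookkeeping of the two different ambient Hilbert spaces: one must verify that the restriction/extension identification is compatible with the form domains, i.e., that a function in $\Wo^1(\Om)$ extended by zero lies in $\Wo^1(\Om_\e)$ (true, since zero trace on $\p\Om$ includes the segments bounding the windows) and that the reverse restriction behaves well, so that the resolvent difference is genuinely small in the stated operator norm. Once this is in place, nothing further is needed: the theorem is a formal corollary. I therefore expect the write-up to be short, essentially a citation of \cite[Ch. V\!I\!I\!I, Sec. 7, Ths. V\!I\!I\!I.23, V\!I\!I\!I.24]{RS} preceded by the remark that Theorem \ref{th2.1} gives the required norm resolvent convergence after the obvious identification of $L_2(\Om)$ with a subspace of $L_2(\Om_\e)$.
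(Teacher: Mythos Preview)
Your approach is correct and is exactly the one taken in the paper: the paper does not give a separate proof of Theorem~\ref{th2.2} at all, but simply states in the sentence preceding it that the result follows immediately from Theorem~\ref{th2.1} by \cite[Ch.~V\!I\!I\!I, Sec.~7, Ths.~V\!I\!I\!I.23, V\!I\!I\!I.24]{RS}. One small simplification: since the windows $\{x:\,x_1=kd,\ |x_2|<\e\}$ are line segments of Lebesgue measure zero, one actually has $L_2(\Om_\e)=L_2(\Om)$ as Hilbert spaces, so no projection or extension-by-zero is needed and the resolvent estimate in Theorem~\ref{th2.1} is already a statement in a single fixed space.
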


Since operator $\He$ is periodic, it has a band spectrum described via Floquet-Bloch decomposition. Namely, let
\begin{equation*}
\g_\e^-:=\{x:\, x_1=0,\, |x_2|<\e\},\quad \g_\e^+:=\{x:\, x_1=d,\, |x_2|<\e\},\quad \G_\e:=\p\Pi\setminus(\g_\e^-\cup\g_\e^+).
\end{equation*}
In $\Pi$ we introduce the operator
\begin{equation*}
\He(\tau):=-\D + V
\end{equation*}
subject to the Dirichlet condition on $\G_\e$ and to quasiperiodic conditions $\g_\e^\pm$:
\begin{equation}\label{2.3a}
u\big|_{\g_\e^+}=e^{\iu\tau d} u\big|_{\g_\e^-},
\qquad \frac{\p u}{\p x_1}\Big|_{\g_\e^+}=e^{\iu\tau d} \frac{\p u}{\p x_1}\Big|_{\g_\e^-},
\end{equation}
where $\tau\in[-\frac{\pi}{d},\frac{\pi}{d})$ is the quasimomentum. As for operator $\He$, we define $\He(\tau)$ as the self-adjoint operator in $L_2(\Pi)$ associated with the sesquilinear form
\begin{equation*}
\he^\tau(u,v):=\big(\nabla u,\nabla v\big)_{L_2(\Pi)} + (Vu,v)_{L_2(\Pi)}
\end{equation*}
in $L_2(\Pi)$ on the domain
\begin{equation*}
\Dom(\he^\tau):=\big\{u\in \Wo^1(\Pi,\G_\e):\, \text{the first condition in (\ref{2.3a}) is satisfied}\}.
\end{equation*}

We introduce one more operator $\Ho^\Pi:=-\D+V$ in $\Pi$ subject to the Dirichlet condition; the associated sesquilinear form $\ho^\Pi$ is given by the same expression as for $\he^\tau$, but the domain is $\Wo^1(\Pi)$.

By $\spec(\cdot)$, $\essspec(\cdot)$, $\discspec(\cdot)$ we denote the spectrum, the essential spectrum, and the discrete spectrum of an operator.

The spectrum of $\He$ is described via that of $\He(\tau)$ \cite{Ku}:
\begin{equation}\label{2.5}
\spec(\He)=\bigcup\limits_{\tau\in\left[-\frac{\pi}{d},\frac{\pi}{d}\right)} \spec(\He(\tau)).
\end{equation}
Potentials $A$ and $V$ are compactly supported in $\overline{\Pi}$ and in accordance with \cite{Bi}:
\begin{equation}\label{2.6}
\essspec(\He(\tau))=\essspec(\Ho^\Pi)=[\pi^2,+\infty).
\end{equation}
Here we have also employed that domain $\Pi$ has two straight outlets at infinity.

The spectrum of $\Ho$ is purely essential and it reads as
\begin{equation}\label{2.7}
\spec(\Ho)=\essspec(\Ho)=\discspec(\Ho^\Pi)\cup [\pi^2,+\infty)
\end{equation}
and each discrete eigenvalue of $\Ho^\Pi$ is an eigenvalue for $\Ho$ of infinite multiplicity.

In view of (\ref{2.5}), (\ref{2.6}), the spectrum of $\He$ contains the band $[\pi^2,+\infty)$. It can also have additional bands generated by the isolated eigenvalues of $\He(\tau)$ located below the threshold of the essential spectrum. As $\e\to+0$, these eigenvalues converge to the discrete eigenvalues of $\Ho^\Pi$ or to $\pi^2$, see (\ref{2.7}) and Theorem~\ref{th2.2}.

The second part of our paper is devoted to the eigenvalues of $\He(\tau)$ converging to $\pi^2$ as $\e\to+0$. In other words, we study the bands in the spectrum $\spec(\He)$ close to $\pi^2$ for small $\e$. We shall see that the existence and structure of these bands are highly sensible to the existence of bounded solutions to the problem
\begin{equation}\label{2.8}
\big(-\D+V-\pi^2\big)\psi_0=0\quad \text{in}\quad\Pi,\qquad \psi_0=0\quad \text{on}\quad \p\Pi.
\end{equation}

Let $(r_-,\theta_-)$ be the polar coordinates associated with variables $(-x_2,x_1)$, and $(r_+,\theta_+)$ be the polar coordinates associated with $(x_2,d-x_1)$.

Denote
\begin{equation*}
\ell_\tau(\psi):=\frac{\p\psi}{\p x_1}(\g_0^-)+\frac{\p\psi}{\p x_1}(\g_0^+)e^{-\iu\tau d}, \quad \ell_\tau'(\psi):=\frac{\p^2\psi}{\p x_1\p x_2}(\g_0^-)+\frac{\p^2\psi}{\p x_1\p x_2}(\g_0^+)e^{-\iu\tau d},
\end{equation*}
where $\g_0^-:=(0,0)$, $\g_0^+:=(0,d)$.
Our second main result is as follows.

\begin{theorem}\label{th2.3} 1) Suppose that problem (\ref{2.8}) has no bounded nontrivial solutions. Then for sufficiently small $\e$ and some $\d>0$ the segment $[\pi-\d,\pi)$ contains no spectrum of $\He(\tau)$ for each $\tau\in\left[-\frac{\pi}{d},\frac{\pi}{d}\right)$.

2) Suppose that problem (\ref{2.8}) has one nontrivial bounded solution satisfying
\begin{gather}
\left|\frac{\p\psi_0}{\p x_1}(\g_0^-)\right|\not= \left|\frac{\p\psi}{\p x_1}(\g_0^+)\right|,\label{2.7a}
\\
\psi_0(x)=c_{0,\pm}\sin\pi(x_1-a_\pm) + O(e^{-\sqrt{3}\pi|x_2|}),\quad x_2\to\pm\infty,\label{2.9}
\end{gather}
where $c_{0,\pm}$ are some constants obeying $|c_{0,+}|^2+|c_{0,-}|^2=1$. Then for each $\tau \in\left[-\frac{\pi}{d},\frac{\pi}{d}\right)$, for all sufficiently small $\e$ in the vicinity of $\pi^2$ there exists exactly one eigenvalue $\l_\e(\tau)$ of $\He(\tau)$. This eigenvalue is simple and has the asymptotics
\begin{equation}\label{2.13}
\l_\e(\tau)=\pi^2-\pi^2|\ell_\tau(\psi_0)|^2\e^4+O(\e^6|\ln\e|),\quad \e\to+0,
\end{equation}
where the estimate for the error term is uniform in $\tau$.

3) Suppose that problem (\ref{2.8}) has two linearly independent bounded solutions $\psi_0^{(j)}$, $j=1,2$, satisfying
\begin{gather}
\left|\frac{\p\psi_0^{(1)}}{\p x_1}(\g_0^-)\right|\not= \left|\frac{\p\psi_0^{(1)}}{\p x_1}(\g_0^+)\right|
\label{2.8a}
\\
\label{2.10}
\psi_0^{(j)}(x)= c_{0,\pm}^{(j)}\sin\pi(x_1-a_\pm) + O(e^{-\sqrt{3}\pi|x_2|}),\quad x_2\to\pm\infty,\quad j=1,2,
\end{gather}
where $c_{0,\pm}^{(j)}$ are some constants obeying
\begin{equation}\label{2.11}
|c_{0,+}^{(j)}|^2+|c_{0,-}^{(j)}|^2=1,\quad j=1,2,\quad c_{0,+}^{(1)}\overline{c_{0,+}^{(2)}}+ c_{0,-}^{(1)}\overline{c_{0,-}^{(2)}}=0,
\end{equation}
and
$L_\tau:=\big(\ell_\tau(\psi_0^{(1)}),\ell_\tau(\psi_0^{(2)})\big)$, $L'_\tau:=\big(\ell'_\tau(\psi_0^{(1)}),\ell'_\tau(\psi_0^{(2)})\big)$.

Then for each $\tau \in\left[-\frac{\pi}{d},\frac{\pi}{d}\right)$, for all sufficiently small $\e$ in the vicinity of $\pi^2$ there exist exactly two eigenvalues $\l_\e^{(j)}(\tau)$, $j=1,2$, of $\He(\tau)$. Both these eigenvalues are simple and have the asymptotics
\begin{align}\label{2.14}
&\l_\e^{(1)}(\tau)=\pi^2-\pi^2|L_\tau|^2\e^4+O(\e^6|\ln\e|),\quad\e\to+0,
\\
& \l_\e^{(2)}(\tau)=\pi^2-\mu(\tau)
\e^8\ln^2\e + O(\e^{10}|\ln\e|^2),\quad\e\to+0,\label{2.15}
\\
&\mu(\tau):=
\frac{\pi^2}{16} \frac{\big(\|L_\tau\|_{\mathds{C}^2}^2\|L'_\tau\|_{\mathds{C}^2}^2 - |(L_\tau,L'_\tau)_{\mathds{C}^2}|^2\big)^2}{\|L_\tau\|_{\mathds{C}^2}^4}, \quad\e\to+0,\nonumber
\end{align}
where the estimates for the error terms are uniform in $\tau$.
\end{theorem}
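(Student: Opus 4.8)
The plan is to reduce the eigenvalue problem for $\He(\tau)$ in a neighbourhood of $\pi^2$ to a finite-dimensional equation, exploiting that opening the windows is a spatially localized perturbation. Fix $\l=\pi^2-k^2$ with small $k>0$; then $\l\notin\discspec(\Ho^\Pi)$, so $(\Ho^\Pi-\l)^{-1}$ exists, and every eigenfunction $u$ of $\He(\tau)$ is reconstructed from its window traces: writing $h:=u|_{\g_\e^-}$ (so $u|_{\g_\e^+}=e^{\iu\tau d}h$) and choosing an extension $\Theta_\e h\in\Wo^1(\Pi,\G_\e)$ with these traces, one has $u=\Theta_\e h-(\Ho^\Pi-\l)^{-1}(-\D+V-\l)\Theta_\e h$, and $\l$ is an eigenvalue exactly when the second condition in (\ref{2.3a}) (the one for $\p u/\p x_1$) is fulfilled, which reads $\mathcal{B}_\e(k,\tau)h=0$ for some $h\not\equiv0$ on the shrinking segments $\g_\e^\pm$. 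A Schur-complement reduction with respect to the $m$-dimensional space of bounded solutions of (\ref{2.8}) ($m=0,1,2$) turns this into the vanishing of an $m\times m$ characteristic matrix $D_\e(k,\tau)$. In case~1, $m=0$: there are no bounded solutions of (\ref{2.8}), hence no virtual level, hence $(\Ho^\Pi-\l)^{-1}$ stays uniformly bounded (in suitable weighted $L_2$-spaces) as $\l\to\pi^2-0$, so $\mathcal{B}_\e(k,\tau)$ remains invertible for $\l\in[\pi^2-\d,\pi^2)$ and small $\e$ uniformly in $\tau$, and no eigenvalue exists there --- this is part~1.

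For $m\in\{1,2\}$ the matrix $D_\e$ is assembled from two expansions. The first is the threshold resolvent expansion of $\Ho^\Pi$: on weighted spaces $L_2\bigl((1+|x_2|)^{\pm2\b}\,dx\bigr)$, $\b$ in an appropriate range, $(\Ho^\Pi-\pi^2+k^2)^{-1}=\frac{c_0}{k}\sum_{j=1}^m\langle\cdot,\psi_0^{(j)}\rangle_{L_2}\psi_0^{(j)}+\mathcal{R}(k)$ with $\mathcal{R}(k)$ bounded uniformly for small $k$ (and holomorphic in $k$ near $0$, needed for the higher-precision asymptotics); here $\psi_0^{(j)}$ (just $\psi_0$ when $m=1$) are normalized via (\ref{2.11}), chosen precisely so that the singular part is diagonal, and the simple pole $1/k$ together with the constant $c_0$ are dictated by the effective one-dimensional operator $-\p_{x_2}^2$ carried by the mode $\sin\pi(x_1-a_\pm)$ in the two outlets, whose free resolvent kernel is $\frac{1}{2k}e^{-k|x_2-x_2'|}$. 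This is the standard virtual-level expansion, obtained by separating the leading transverse mode in the outlets and a Birman--Schwinger reduction in the core. The second is a boundary-layer analysis near the windows: rescaling $\xi=(x-\g_0^\mp)/\e$ turns a neighbourhood of a window into the model domain $\Xi=\mathds{R}^2\setminus\{\xi_1=0,\,|\xi_2|\geqslant1\}$ with Dirichlet conditions on the two rays. Since $\Xi$ carries no nonzero bounded harmonic function vanishing on the rays, the matching is organized around the model solutions realizing prescribed linear growth $A_\pm\xi_1+o(\xi_1)$ as $\xi_1\to\pm\infty$; these are built from $\xi_1$ itself and an odd model solution $W$ with $W=|\xi_1|+O(|\xi|^{-1})$ at infinity, the $O(|\xi|^{-1})$ term being a fixed dipole whose moment is the window's ``polarizability'' --- a universal constant, with no logarithm at this order. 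Matched to the outer field $u\approx(\text{amplitude})\,\psi_0^{(j)}$, whose Taylor expansion at $\g_0^\mp$ is $\frac{\p\psi_0^{(j)}}{\p x_1}\,x_1+\frac{\p^2\psi_0^{(j)}}{\p x_1\p x_2}\,x_1x_2+O(|x|^3)$ --- the pure $x_1^2$ and $x_2^2$ terms being absent because $\psi_0^{(j)}$ and $\D\psi_0^{(j)}=(V-\pi^2)\psi_0^{(j)}$ vanish at $\g_0^\mp$ --- this feeds the combinations $\ell_\tau(\psi_0^{(j)})$ into $D_\e$ with prefactor $\propto\e^2$ and $\ell_\tau'(\psi_0^{(j)})$ with prefactor $\propto\e^4|\ln\e|$, the logarithm appearing at the order at which the dipole field generated at one window reaches the other window and the outlets and must be re-matched.

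Assembling: the window generates in the outer region a source at $\g_0^-$ and $e^{\iu\tau d}$ times the same at $\g_0^+$, of strength $\propto\e^2\,\ell_\tau(\psi_0^{(j)})$ to leading order, which through the singular resolvent term returns a field $\propto\frac{\e^2}{k}\,|\ell_\tau(\psi_0^{(j)})|^2\,\psi_0^{(j)}$; self-consistency of the reconstruction then gives, in case~2, a scalar equation $D_\e(k,\tau)=0$ with leading balance $k^2=\pi^2|\ell_\tau(\psi_0)|^2\e^4\bigl(1+O(\e^2|\ln\e|)\bigr)$, i.e. (\ref{2.13}); by (\ref{2.7a}) one has $\ell_\tau(\psi_0)\not=0$ for every $\tau$, so this root is unique and simple, and since the corresponding eigenfunction is (amplitude)$\times(\psi_0+\text{exponentially decaying correction})$, $\l_\e(\tau)$ is simple. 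In case~3 the matrix $D_\e$ is $2\times2$, Hermitian, with leading part of order $\e^4$ a rank-one matrix whose nonzero eigenvalue $\propto\e^4\|L_\tau\|_{\mathds{C}^2}^2$ is attained along the vector $L_\tau$: this gives $\l_\e^{(1)}(\tau)$ as in (\ref{2.14}); along the orthogonal direction the $\e^4$-term vanishes, and the Schur complement (second-order perturbation) brings in $L'_\tau$ and the logarithmic corrections, producing a contribution of order $\e^8\ln^2\e$ with coefficient the squared Gram-type determinant $\tfrac{\pi^2}{16}\bigl(\|L_\tau\|_{\mathds{C}^2}^2\|L'_\tau\|_{\mathds{C}^2}^2-|(L_\tau,L'_\tau)_{\mathds{C}^2}|^2\bigr)^2/\|L_\tau\|_{\mathds{C}^2}^4=\mu(\tau)$, i.e. (\ref{2.15}); by (\ref{2.8a}) one has $L_\tau\not=0$ for all $\tau$, so $\|L_\tau\|_{\mathds{C}^2}^4$ never degenerates, and both eigenvalues are simple.

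Each of these steps is finally converted into an explicit construction of approximate eigenvalues $\l_\e^{\mathrm{app}}(\tau)$ and quasimodes $U_\e$ with $\|(\He(\tau)-\l_\e^{\mathrm{app}}(\tau))U_\e\|=o(\text{spectral spacing})\|U_\e\|$; the spectral theorem then places a true eigenvalue within the stated error, while the finite-dimensionality and non-degeneracy of $D_\e(k,\tau)$ guarantee that there are no other eigenvalues in the range and that they are simple. Uniformity in $\tau\in[-\tfrac{\pi}{d},\tfrac{\pi}{d})$ is automatic: $\tau$ enters only polynomially through $\ell_\tau$, $L_\tau$, $L'_\tau$; the resolvent of $\Ho^\Pi$ and the model-window constants do not depend on $\tau$; and (\ref{2.7a}), (\ref{2.8a}) prevent degeneration on the compact quasimomentum interval. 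I expect the main obstacle to be the combined threshold-resolvent-plus-boundary-layer analysis --- establishing the weighted-space resolvent expansion with explicit constant and controlled remainder, solving the model problem in $\Xi$ and extracting the polarizability together with the subleading $\e^4\ln\e$ terms, and carrying the matching to the precision required for (\ref{2.15}) --- all kept uniform in $\tau$ and in the separation between the window scale $\e$ and the localization length $k^{-1}\sim\e^{-2}$ of the emerging states.
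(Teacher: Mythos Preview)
Your proposal is correct and follows essentially the same strategy as the paper: both combine a threshold resolvent expansion for $\Ho^\Pi$ (exhibiting the $1/k$ pole coming from the bounded solutions of (\ref{2.8})) with a matched inner/outer expansion near the windows, reduce the eigenvalue problem to a finite-dimensional characteristic equation whose size equals the number of virtual levels, and extract the leading coefficients of $k_\e^{(j)}$ from solvability/matching conditions before justifying the error terms. The paper packages the reduction as an operator equation $(I+\A_3(k,\e,\tau))g=0$ in $L_2(\Pi^0)$ (borrowed from \cite{MSb06}, \cite{PMA15}) and carries out the matching in the style of \cite{Il}, whereas you phrase it as a Dirichlet-to-Neumann/Schur-complement reduction on the window traces with a quasimode justification --- but these are the same ideas in different clothing.
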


The above theorem gives us a nice opportunity to describe the band spectrum of $\He$ in the vicinity of $\pi^2$. If problem (\ref{2.8}) has no nontrivial bounded solutions, operator $\He$ has no spectral bands below $\pi^2$. Namely, for some $\d>0$ in $[\pi^2-\d,\pi^2)$ there is no spectrum of $\He$ provided $\e$ is sufficiently small.

Once problem (\ref{2.8}) has one bounded nontrivial solution, eigenvalue $\l_\e(\tau)$ of operator $\He(\tau)$ generates the spectral band $S_\e:=[\min\limits_{\tau}\l_\e(\tau), \max\limits_{\tau}\l_\e(\tau)]$ for $\He(\tau)$. Asymptotics (\ref{2.13}) allows us to describe the behaviour of the end-points for this band:
\begin{align*}
\min\limits_{\tau} \l_\e(\tau)=&\pi^2-\pi^2\e^4 \max\limits_{\tau} |\ell_\tau(\psi_0)|^2 + O(\e^6|\ln\e|)
\\
=& \pi^2-\pi^2\e^4 \left(\left|\frac{\p\psi_0}{\p x_1}(\g_0^-)\right|+\left|\frac{\p\psi_0}{\p x_1}(\g_0^+)\right|\right)^2
+ O(\e^6|\ln\e|)
\end{align*}
and
\begin{align*}
\max\limits_{\tau} \l_\e(\tau)=&\pi^2-\pi^2\e^4 \min\limits_{\tau} |\ell_\tau(\psi_0)|^2 + O(\e^6|\ln\e|)
\\
=& \pi^2-\pi^2\e^4 \left(\left|\frac{\p\psi_0}{\p x_1}(\g_0^-)\right|-\left|\frac{\p\psi_0}{\p x_1}(\g_0^+)\right|\right)^2
+ O(\e^6|\ln\e|).
\end{align*}
In view of assumption (\ref{2.7a}), both the quantities $\left|\frac{\p\psi_0}{\p x_1}(\g_0^-)\right|\pm \left|\frac{\p\psi}{\p x_1}(\g_0^+)\right|$ are non-zero and hence, band $S_\e$ is separated from $\pi^2$. It means that the presence of the small windows coupling the cells in domain $\Om_\e$ turns the virtual level at $\pi^2$ into a small spectral band below $\pi^2$. We stress that this band disappear once $\e=0$.

A similar situation happens if problem (\ref{2.8})  has two bounded non-trivial solutions. Eigenvalues $\l_\e^{(j)}(\tau)$, $j=1,2$, generate two spectral bands $S_\e^{(j)}:=[\min\limits_{\tau}\l_\e^{(j)}(\tau), \max\limits_{\tau}\l_\e^{(j)}(\tau)]$. Asymptotics (\ref{2.14}) implies
\begin{align}\label{2.18}
&
\begin{aligned}
&
\min\limits_{\tau} \l_\e^{(1)}(\tau)=\pi^2-\pi^2\e^4 \big(\|p_1\|_{\mathds{C}^2}^2+\|p_2\|_{\mathds{C}^2}^2 + 2 |(p_1,p_2)_{\mathds{C}^2}|\big) + O(\e^6|\ln\e|),
\\
&
\max\limits_{\tau} \l_\e^{(1)}(\tau)=\pi^2-\pi^2\e^4 \big(\|p_1\|_{\mathds{C}^2}^2+\|p_2\|_{\mathds{C}^2}^2 - 2 |(p_1,p_2)_{\mathds{C}^2}|\big) + O(\e^6|\ln\e|),
\end{aligned}
\\
&p_1:=\left(\frac{\p\psi_0^{(1)}}{\p x_1}(\g_0^-),\frac{\p\psi_0^{(2)}}{\p x_1}(\g_0^-)\right),\quad p_2:=\left(\frac{\p\psi_0^{(1)}}{\p x_1}(\g_0^+),\frac{\p\psi_0^{(2)}}{\p x_1}(\g_0^+)\right).\nonumber
\end{align}
Band $S_\e^{(1)}$ is separated from $\pi^2$ thanks to assumption (\ref{2.8a}) and the above asymptotics.

It follows from (\ref{2.15}) that
\begin{align*}
&\min\limits_{\tau} \l_\e^{(2)}(\tau)=\pi^2-\pi^2\e^8\ln^2\e\,\max\limits_{\tau} \mu(\tau) + O(\e^{10}|\ln\e|),
\\
&\max\limits_{\tau} \l_\e^{(2)}(\tau)=\pi^2-\pi^2\e^8\ln^2\e\,\min\limits_{\tau} \mu(\tau) + O(\e^{10}|\ln\e|).
\end{align*}
We first observe that bands $S_\e^{(1)}$ and $S_\e^{(2)}$ do not intersect since the distance from $S_\e^{(1)}$ to $\pi^2$ is of order $O(\e^4)$, see (\ref{2.18}), while similar distance for $S_\e^{(2)}$ is at most $O(\e^8\ln^2\e)$. If $\mu(\tau)>0$, band $S_\e^{(2)}$ is separated from $\pi^2$. In distinction to the above discussed cases, function $\mu(\tau)$ can attain its minimum or/and maximum at the internal points of Brillouin zone $\left[-\frac{\pi}{d},\frac{\pi}{d}\right)$ apart from  the center and the end-points. If it happens, band function $\l_\e^{(2)}(\tau)$ also attains its minimum or/and maximum in the internal points of the Brillouin zone. This is an interesting phenomenon discovered recently for elliptic operators with small periodic perturbations \cite{BP1}, \cite{BP2}, \cite{BP3}, \cite{Na2}, \cite{Na6}. The present paper provide one more example of a periodic operator with such property. The difference is that in our case the bands converge to the threshold of the essential spectrum of operator $\Ho^\Pi$, not to a discrete eigenvalue.

It should be also stressed that operator $\Ho^\Pi$ can have also discrete eigenvalues below the essential spectrum. While the windows open, these eigenvalues generate spectral bands for operator $\He$. Their structure and asymptotic behavior can be studied exactly in the same way as it was done in \cite{RJMP15} for a similar operator but with the Neumann condition.

\section{Norm resolvent convergence}

In this section we prove Theorem~\ref{th2.1}. We let
\begin{equation*}
B_\e^k:=\{x:\, |x-(kd,0)|<3\e\},\quad \Pi^k:=\{x:\, x-(kd,0)\in\Pi\}.
\end{equation*}
The first main ingredient of the proof is the following auxiliary lemma.

\begin{lemma}\label{lm3.1}
For sufficiently small $\e$, for each $u\in\Wo^2(\Om)$, $v\in\Wo^1(\Om_\e)$ the estimates
\begin{align}
&\sum\limits_{k\in\mathds{Z}} \|u\|_{L_2(B_\e^k)}^2 \leqslant C\e^4|\ln\e| \|u\|_{W_2^2(\Om)}^2,
\label{3.1}
\\
&\sum\limits_{k\in\mathds{Z}} \|\nabla u\|_{L_2(B_\e^k)}^2 \leqslant C\e^2|\ln\e| \|u\|_{W_2^2(\Om)}^2,
\label{3.5}
\\
&\sum\limits_{k\in\mathds{Z}} \|v\|_{L_2(B_\e^k)}^2 \leqslant C\e^2 \|v\|_{W_2^1(\Om)}^2
\label{3.2}
\end{align}
hold true, where $C$ is a constant independent of $\e$, $u$, and $v$.
\end{lemma}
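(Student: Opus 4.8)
The plan is to reduce all three estimates to a single local bound on a disc of radius $3\e$ near a point of the boundary segment $\{x_1=kd\}$, then sum over $k$. Because $\Om$ is $d$-periodic along $Ox_1$ and every ball $B_\e^k$ is centered on the vertical boundary segment $\{x_1=kd,\ |x_2|<x_2^0\}$, it suffices (after a unit-cell translation) to work on the fixed half-disc $B_\e^0\cap\{x_1>0\}$ sitting against the flat wall $\{x_1=0\}$, where functions in $\Wo^2(\Om)$ or $\Wo^1(\Om_\e)$ satisfy a Dirichlet condition on that wall. For $v\in\Wo^1$, the Dirichlet condition on $\{x_1=0\}$ gives, via the one-dimensional Hardy/Poincar\'e inequality in $x_1$ on $(0,3\e)$, the pointwise-in-$x_2$ estimate
\begin{equation*}
\int_{0}^{3\e}|v|^2\di x_1\leqslant C\e^2\int_{0}^{3\e}\Big|\frac{\p v}{\p x_1}\Big|^2\di x_1 ;
\end{equation*}
integrating in $x_2$ over $|x_2|<3\e$ yields $\|v\|_{L_2(B_\e^k)}^2\leqslant C\e^2\|\nabla v\|_{L_2(B_\e^k)}^2$, and summing over $k$ and using that the balls $B_\e^k$ have finite overlap (indeed for small $\e$ they are pairwise disjoint) gives (\ref{3.2}). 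This is the easy case and needs no Sobolev embedding.

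For $u\in\Wo^2(\Om)$ the same Hardy inequality gives (\ref{3.5}) from (\ref{3.2}) applied to $\nabla u$ — except that $\p u/\p x_2$ need not vanish on the wall. To handle that component one instead uses the scalar estimate directly: for a fixed $k$, after rescaling $x=kd+\e y$ the half-disc of radius $3$ carries the function $u_\e(y):=u(kd+\e y)$ with $u_\e=0$ on $\{y_1=0\}$, and the trace/embedding $W_2^2\hookrightarrow C$ in two dimensions is \emph{not} uniform — this is where the logarithm enters. The cleanest route is to avoid rescaling and instead use the 2D logarithmic Sobolev-type bound on a ball of radius $3\e$: for $w\in W_2^2$,
\begin{equation*}
\|w\|_{L_\infty(B_{3\e})}^2\leqslant C\big(|\ln\e|\,\|\nabla w\|_{L_2(B_{3\e})}^2+\e^{-2}\|w\|_{L_2(B_{3\e})}^2+\|\nabla^2 w\|_{L_2(B_{3\e})}^2\big),
\end{equation*}
and similarly $\|\nabla w\|_{L_\infty}^2$ controlled by $|\ln\e|\|\nabla^2 w\|_{L_2}^2$ plus lower-order terms. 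Feeding $w=u$, multiplying by the area $|B_\e^k|\asymp\e^2$, and summing over $k$ — where the sum of the $W_2^2$-norms over the disjoint balls is bounded by $\|u\|_{W_2^2(\Om)}^2$ — produces the factors $\e^2\cdot\e^2|\ln\e|$ for (\ref{3.1}) and $\e^2\cdot|\ln\e|$ for (\ref{3.5}). (The Dirichlet condition is used only to drop the $\e^{-2}\|u\|_{L_2}^2$ term in favor of $\|\nabla u\|_{L_2}^2$ via Poincar\'e on the half-disc, so that no negative power of $\e$ survives.)

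The main obstacle is the two-dimensional borderline Sobolev embedding: $W_2^2$ embeds into $C^0$ but with a constant that blows up logarithmically under the scaling $x\mapsto\e y$, and one must track this blow-up carefully and, crucially, exploit the Dirichlet boundary condition on the wall to cancel the otherwise divergent $\e^{-2}$ contribution. A secondary technical point is that near $B_\e^k$ the domain $\Om$ (resp. $\Om_\e$) is genuinely a half-plane-like region with a $C^3$ part of the boundary and, for $\Om_\e$, the little window $\g_\e^\pm$ of width $2\e$; but since the window has the same scale $\e$ as the ball, after rescaling the rescaled domain is a fixed (k-independent, $\e$-independent up to $C^3$-small perturbations) Lipschitz domain on which the embedding constants are uniform, so the periodicity and the uniform $C^3$-regularity of $\p\Pi$ are exactly what make the constant $C$ independent of $k$ and $\e$.
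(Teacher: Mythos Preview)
There is a genuine gap in your derivation of (\ref{3.5}) and, through it, of (\ref{3.1}). The inequality you invoke, ``$\|\nabla w\|_{L_\infty}^2$ controlled by $|\ln\e|\,\|\nabla^2 w\|_{L_2}^2$ plus lower-order terms'' on $B_{3\e}$, is false in two dimensions: $W_2^1\not\hookrightarrow L_\infty$, so $\nabla u$ need not be bounded even when $u\in W_2^2$. More fundamentally, the local estimate your argument requires,
\[
\|\nabla u\|_{L_2(B_\e^k)}^2\leqslant C\e^2|\ln\e|\,\|u\|_{W_2^2(B_\e^k)}^2,
\]
with all norms taken \emph{on the same small ball}, cannot hold: for $u(x)=x_1-kd$ one has $\|\nabla u\|_{L_2(B_\e^k)}^2\asymp\e^2$ and $\|u\|_{W_2^2(B_\e^k)}^2\asymp\e^2$, so the inequality would force $1\leqslant C\e^2|\ln\e|$. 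The logarithm is \emph{not} produced by any borderline embedding confined to the $\e$-scale; it arises from comparing the small scale $\e$ to a \emph{fixed} scale $\d$. The paper's proof (following Oleinik--Sanchez-Hubert--Yosifian) establishes instead
\[
\|\nabla u\|_{L_2(B_\e^k)}^2\leqslant C\e^2|\ln\e|\,\|u\|_{W_2^2(B_\d^k)}^2
\]
for some fixed $\d>0$; one route is the 2D embedding $W_2^1\hookrightarrow L_p$ with constant $\sim\sqrt{p}$, H\"older on $B_\e^k$, and the choice $p\sim|\ln\e|$. Summation then uses the finite overlap of the \emph{large} balls $B_\d^k$, and (\ref{3.1}) follows by combining with the Poincar\'e bound $\|u\|_{L_2(B_\e^k)}^2\leqslant C\e^2\|\nabla u\|_{L_2(B_\e^k)}^2$.

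A secondary issue concerns (\ref{3.2}): your one-dimensional Hardy argument on the half-disc $B_\e^0\cap\{x_1>0\}$ uses $v(0,x_2)=0$, but for $v\in\Wo^1(\Om_\e)$ this fails precisely on the window $|x_2|<\e$, where $v$ is continuous across $\{x_1=0\}$ and generically nonzero. The paper instead rescales the \emph{full} ball $B_\e^k$ to a fixed ball $B_1$ of radius $3$ and applies Poincar\'e for functions vanishing on the two rescaled slits $\{\xi_1=0,\ 1<|\xi_2|<3\}$; the slits have positive capacity, so the inequality $\|\widetilde v\|_{L_2(B_1)}\leqslant C\|\nabla\widetilde v\|_{L_2(B_1)}$ holds with an $\e$-independent constant, and undoing the rescaling gives (\ref{3.10}). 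You allude to this mechanism in your final paragraph, but the body of your argument does not implement it.
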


\begin{proof}
We begin with proving (\ref{3.2}). In each $B_\e^k$ we introduce rescaled variables  $\xi^k:=(\xi^k_1,\xi^k_2)=(x_1-kd,x_2)\e^{-1}$. Function $\widetilde{v}(\xi^k):=v(\e\xi_1^k+kd,\e\xi_2^k)$ belongs to $\Wo^1(B_1,\G)$, where $B_1:=\{\xi^k: |\xi^k|<3\}$, $\G:=\{\xi:\, \xi_1^k:=0,\, 1<|\xi_1^k|<3\}$. And since $\widetilde{v}$ vanishes on $\G$, we have the estimate
\begin{equation}\label{3.3}
\|\widetilde{v}\|_{L_2(B_1)}^2\leqslant C\|\nabla_\xi \widetilde{v}\|_{L_2(B_1)}^2,
\end{equation}
where $C$ is a constant independent of $\widetilde{v}$. Rewriting this estimate in variables  $x$, we get
\begin{equation}\label{3.10}
\|v\|_{L_2(B_\e^k)}^2\leqslant C\e^2 \|\nabla v\|_{L_2(B_\e^k)}^2
\end{equation}
with the same constant $C$ as in (\ref{3.3}). Summing up this inequality over $k\in\mathds{Z}$, we arrive at (\ref{3.2}).

We proceed to inequalities (\ref{3.1}), (\ref{3.5}). Since $u$ is also an element of $\Wo^1(\Om_\e)$, it satisfies (\ref{3.10}). Then proceeding as in the proof of Lemma~3.2 in \cite{OSHY}, one can prove easily the estimate
\begin{equation*}
\|\nabla u\|_{L_2(B_\e^k)}^2\leqslant C\e^2|\ln\e|\|u\|_{W_2^2(B_\d^k)}^2
\end{equation*}
for some fixed $\d>0$, where constant $C$ is independent of $\e$, $k$, and $u$. The obtained estimate and (\ref{3.10}) with $v=u$ lead us to (\ref{3.5}), (\ref{3.2}). \end{proof}

Given $f\in L_2(\Pi)$, we let $u_\e:=(\He-\iu)^{-1}f$, $u_0:=(\Ho-\iu)^{-1}f$. By $\chi_1=\chi_1(t)$ we denote an infinitely differentiable cut-off function being one as $t<2$ and vanishing for $t>3$. We also denote $\chi_\e(x):=\sum\limits_{k\in\mathds{Z}} \chi_1(|x-(kd,0)|\e^{-1}) $.

By straightforward calculations one can check that the function
$v_\e:=u_\e-\chi_\e u_0$ solves the equation
\begin{equation*}
(\He-\iu)v_\e=\chi_\e f + g_\e,\quad g_\e:=-2\nabla\chi_\e\cdot \nabla u_0 - u_0\D\chi_\e.
\end{equation*}
We write the associated integral identity: 
\begin{equation}\label{3.4}
\he(v_\e,v_\e)-\iu\|v_\e\|_{L_2(\Om_\e)}^2=(\chi_\e f, v_\e)_{L_2(\Om_\e)} + (g_\e, v_\e)_{L_2(\Om_\e)}.
\end{equation}
Let us estimate the right hand side of this identity.

Function $\chi_\e$ is non-zero only in $B_k^\e$, and  $|\nabla \chi_\e|\leqslant C\e^{-1}$, $|\D\chi_\e|\leqslant C\e^{-2}$, where $C$ is a constant independent of $x$ and $\e$, and $v_\e\in \Wo^1(\Om_\e)$, $u_0\in \Wo^2(\Om)$. Then by Lemma~\ref{lm3.1}, we have the estimates
\begin{equation}\label{3.6}
\big|(\chi_\e f,v_\e)_{L_2(\Om_\e)}\big|\leqslant C\|f\|_{L_2(\Om)} \|v_\e\|_{L_2\big(\bigcup\limits_{k\in\mathds{Z}}B_\e^k\big)} \leqslant C\e\|v_\e\|_{W_2^1(\Om_\e)}^2,
\end{equation}
and
\begin{equation}\label{3.7}
\big|(g_\e,v_\e)_{L_2(\Om_\e)}\big| \leqslant C\e|\ln\e|^{\frac{1}{2}} \|u\|_{W_2^2(\Om)} \|v_\e\|_{W_2^1(\Om_\e)}.
\end{equation}
Hereinafter by $C$ we denote various inessential constants independent of $\e$, $u$, $v_\e$, $f$.

By standard smoothness improving theorems we obtain
\begin{equation}\label{3.8}
\|u\|_{W_2^2(\Om)}\leqslant C\|f\|_{L_2(\Om)}.
\end{equation}
It is also easy to make sure that
\begin{equation*}
\|v_\e\|_{W_2^1(\Om_\e)}^2 \leqslant C\big|\he(v_\e,v_\e)-\iu\|v_\e\|_{L_2(\Om_\e)}^2\big|.
\end{equation*}
This estimate, (\ref{3.4}), (\ref{3.6}), (\ref{3.7}), (\ref{3.8}) yield
\begin{equation}\label{3.9}
\|v_\e\|_{W_2^1(\Om_\e)}\leqslant C\e|\ln\e|^{\frac{1}{2}}\|f\|_{L_2(\Om)}.
\end{equation}
In the same way how (\ref{3.7}) was proven, we get
\begin{equation*}
\|\chi_\e u_0\|_{W_2^1(\Om_\e)} \leqslant C\e|\ln\e|^{\frac{1}{2}} \|f\|_{L_2(\Om)}.
\end{equation*}
This inequality and (\ref{3.9}) imply
\begin{equation*}
\|u_\e-u_0\|_{W_2^1(\Om_\e)}\leqslant C\e |\ln\e|^{\frac{1}{2}} \|f\|_{L_2(\Om)}
\end{equation*}
that completes the proof.

\newpage

\section{Spectral bands}

This section is devoted to the proof of Theorem~\ref{th2.3}. Our proof consists of three parts. In the first part we study the existence of the eigenvalues of $\He(\tau)$ converging to $\pi^2$ as $\e\to+0$. In the second part we construct formally the asymptotic expansions for these eigenvalues in the case they exist. The third part is devoted to the justification of the formal asymptotics.

\subsection{Existence}\label{Subsect4.1}

We adapt the approach employed in \cite[Sect. 3,4]{MSb06} for studying a similar issue. In what follows we shall demonstrate only the main milestones omitting all the details borrowed directly from \cite{MSb06}.

Denote $\Pi^0:=\Pi\cap\{x:\, |x_2|<x_2^\infty+1\}$, $\Pi^\pm:=\Pi\cap\{x:\, \pm x_2>x_2^\infty+1\}$. Given $f\in L_2(\Pi)$ supported in $\Pi^0$, we consider the boundary value problem
\begin{equation}\label{4.1}
\big(-\D+V-\pi^2+k^2\big)u=f\quad\text{in}\quad\Pi, \qquad u=0\quad\text{on}\quad \p\Pi,
\end{equation}
where $k$ is a small complex parameter. We shall seek the generalized solution to this problem behaving at infinity as
\begin{equation}\label{4.2}
u(x,k)=c_\pm(k)e^{-k|x_2|}\sin\pi(x_1-a_\pm) + O\big(e^{-\sqrt{3\pi^2+k^2}|x_2|}\big),\quad x_2\to\pm\infty,
\end{equation}
where $c_\pm(k)$ are some constants.

Let $g\in L_2(\Pi^0)$ be an arbitrary function which we continue by zero in $\Pi^\pm$. We consider two boundary value problems
\begin{equation}\label{4.3}
(-\D-\pi^2+k^2)v^\pm=g\quad\text{in}\quad\Pi^\pm,\qquad v^\pm=0\quad\text{on}\quad\p\Pi^\pm,
\end{equation}
and we solve them by the separation of variables:
\begin{align*}
&v^\pm(x,k)=\sum\limits_{j=1}^{\infty} \int\limits_{\Pi^\pm} G_j^\pm(x,t,k)g(t)\di t,\quad t=(t_1,t_2),
\\
&G_j^\pm(x,t,k):=\frac{1}{s_j(k)}\big(e^{-s_j(k)|x_1-t_1|}-e^{\mp s_j(k)(x_1+t_1)}\big)\sin\pi jx_2\sin\pi j t_2,
\end{align*}
where $s_1(k):=k$, $s_j(k):=\sqrt{\pi^2(j^2-1)+k^2}$, $j\geqslant 2$. As $k=0$, we let
\begin{equation*}
G_1^\pm(x,t,0):=\big(-|x_1-t_1|\pm(x_1+t_1)\big) \sin\pi x_2 \sin\pi t_2.
\end{equation*}
We introduce one more function: $v:=v^\pm$ in $\Pi^\pm$, $v=0$ in $\Pi^0$.

We consider the boundary value problem
\begin{equation}\label{4.4}
-\D w=g+(\pi^2-k^2)v\quad\text{in}\quad\Pi^0,\qquad w=v\quad\text{on}\quad \p\Pi^0.
\end{equation}
This problem is uniquely solvable.

Let $\chi_2=\chi_2(x_2)$ be an infinitely differentiable cut-off function equalling one as $|x_2|<x_2^\infty$ and vanishing for $|x_2|>x_2^\infty+1$. We construct the solution to problem (\ref{4.1}), (\ref{4.2}) as
\begin{equation}\label{4.5}
u(x,k)=\chi_2(x_2) w(x,k) + (1-\chi_2(x_2)) v(x,k).
\end{equation}
This function satisfies the required boundary condition and (\ref{4.2}). Substituting (\ref{4.5}) into the equation in (\ref{4.1}) and employing the equations in (\ref{4.3}), (\ref{4.4}), we arrive at the equation for $g$:
\begin{gather}\label{4.6}
(I+\A_1(k))g=f,
\\
\A_1(k)g:=(w-v)(-\D-\pi^2+k^2)\chi_2 - 2\nabla\chi_2\cdot \nabla(w-v).
\nonumber
\end{gather}
Reproducing now arguments from \cite{MSb06}, one can prove  the following statements. Equation (\ref{4.6}) is equivalent to problem (\ref{4.1}), (\ref{4.2}). Operator $\A_1$ is compact in $L_2(\Pi^0)$ and is holomorphic w.r.t. $k$. If problem (\ref{2.8}) has no nontrivial bounded generalized solutions, the inverse operator $(I+\A_1(k))^{-1}$ is bounded for all sufficiently small $\e$. If problem (\ref{2.8}) has nontrivial bounded generalized solutions described in the formulation of Theorem~\ref{th2.3}, the inverse operator $(I+\A_1(k))^{-1}$ has a first order pole at $k=0$:
\begin{equation*}
(I+\A_1(k))^{-1}=\frac{g_0}{k}(\cdot\,,\psi_0)_{L_2(\Pi^0)} + \A_2(k),
\end{equation*}
provided problem (\ref{2.8}) has one nontrivial solution satisfying (\ref{2.9}), and
\begin{equation*}
(I+\A_1(k))^{-1}=\frac{1}{k}\sum\limits_{j=1}^{2}g_0^{(j)}(\cdot\,,\psi_0^{(j)})_{L_2(\Pi^0)} + \A_2(k),
\end{equation*}
provided problem (\ref{2.8}) has two nontrivial solutions satisfying (\ref{2.10}). Here $\A_2(k)$ is a bounded operator in $L_2(\Pi^0)$
holomorphic w.r.t. small complex $k$, and $g_0, g_0^{(j)}\in L_2(\Pi^0)$ are the functions generating the above mentioned solutions $\psi_0$, $\psi_0^{(j)}$ of problem (\ref{2.8}) by formula (\ref{4.5}) with $g=g_0$, $g=g_0^{(j)}$.

We proceed to the eigenvalue equation $\He(\tau)$, which we write as the boundary value problem
\begin{equation}\label{4.7}
\big(-\D+V\big)\psi_\e=(\pi^2-k^2)\psi_\e\quad\text{in}\quad\Pi, \qquad \psi_\e=0\quad\text{on}\quad \G_\e,
\end{equation}
with boundary conditions (\ref{2.3a}). At infinity, we again assume behavior (\ref{4.2}). Then the eigenvalues of $\He(\tau)$ correspond to values $k$ with $\RE k>0$, for which problem (\ref{4.7}), (\ref{4.2}) has a nontrivial solution.

Problem (\ref{4.7}), (\ref{4.2}) can be also reduced to an operator equation like (\ref{4.6}); one just should replace problem (\ref{4.4}) by
\begin{equation*}
-\D w_\e=g+(\pi^2-k^2)v\quad\text{in}\quad\Pi^0,\qquad w_\e=v\quad\text{on}\quad \p\Pi^0\setminus(\g_\e^-\cup\g_\e^+)
\end{equation*}
with boundary conditions (\ref{2.3a}). The corresponding equation in $L_2(\Pi^0)$ reads as
\begin{equation}\label{4.48}
(I+\A_3(k,\e,\tau))g=0,
\end{equation}
where $\A_3(k,\e,\tau)$ is a compact operator in $L_2(\Pi^0)$ holomorphic w.r.t. $k$ for each $\tau$ and $\e$. It is defined by the same expression as $\A_1(k)$.

Proceeding as in the proof of Theorem~\ref{th2.1}, it is easy to make sure that
\begin{equation*}
\|w_\e-w\|_{W_2^1(\Pi^0)}\leqslant C\e|\ln\e|^{\frac{1}{2}}\|g\|_{L_2(\Pi^0)},
\end{equation*}
where $C$ is a constant independent of $\e$, $\tau$, and $g$. This estimate and the definition of $\A_3$ yield the estimate
\begin{equation*}
\|\A_3-\A_1\|_{L_2(\Pi^0\to L_2(\Pi^0))}\leqslant C\e|\ln\e|^{\frac{1}{2}},
\end{equation*}
where $C$ is a constant independent of $\e$ and $\tau$. Then arguing as in \cite{MSb06}, one can prove the following lemma.

\begin{lemma}\label{lm4.1}

\noindent 1) If problem (\ref{2.8}) has no nontrivial bounded solutions, for sufficiently small $\e$ and some $\d>0$ the segment $[\pi-\d,\pi)$ contains no spectrum of $\spec(\He)$.

\noindent 2) If problem (\ref{2.8}) has nontrivial bounded solutions, there exists $k=k_\e\to0$  for which problem (\ref{4.7}), (\ref{4.2}) has a nontrivial solution. The total amount of such linearly independent solutions associated with all $k_\e$ coincides with the number of nontrivial bounded solutions to problem (\ref{2.8}).
\end{lemma}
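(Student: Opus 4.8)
The plan is to reduce Lemma~\ref{lm4.1} to a perturbative analysis of the operator equation~(\ref{4.48}), exploiting the decomposition of $(I+\A_1(k))^{-1}$ recalled above together with the norm estimate $\|\A_3(k,\e,\tau)-\A_1(k)\|_{L_2(\Pi^0)\to L_2(\Pi^0)}\leqslant C\e|\ln\e|^{1/2}$. Part 1) is essentially immediate: if problem~(\ref{2.8}) has no nontrivial bounded solutions, then $(I+\A_1(k))^{-1}$ is uniformly bounded for small $k$, say $\|(I+\A_1(k))^{-1}\|\leqslant M$ on a fixed disc $|k|\leqslant k_0$. Writing $I+\A_3=(I+\A_1)\big(I+(I+\A_1)^{-1}(\A_3-\A_1)\big)$ and choosing $\e$ small enough that $M\,C\e|\ln\e|^{1/2}<\tfrac12$, a Neumann series shows $I+\A_3(k,\e,\tau)$ is boundedly invertible for all $|k|\leqslant k_0$, all $\tau$, and all sufficiently small $\e$. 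Hence~(\ref{4.48}) has only the trivial solution, so $\He(\tau)$ has no eigenvalue with $\pi^2-k^2$ in a fixed neighbourhood of $\pi^2$ below it; in view of~(\ref{2.5}), (\ref{2.6}) this gives $[\pi^2-\d,\pi^2)\cap\spec(\He)=\varnothing$ for a suitable $\d>0$ (I would state the conclusion about $\spec(\He)$, correcting the $\pi$ vs.\ $\pi^2$ typo in the lemma statement).

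For part 2) the idea is a Rouch\'e/Schur-complement argument localized at the pole $k=0$. Take first the case of one bounded solution $\psi_0$, so $(I+\A_1(k))^{-1}=k^{-1}g_0(\cdot,\psi_0)_{L_2(\Pi^0)}+\A_2(k)$ with $\A_2$ holomorphic. The equation $(I+\A_3)g=0$ is equivalent to $g=-(I+\A_1)^{-1}(\A_3-\A_1)g$; substituting the decomposition, one sees that any nontrivial solution must have a nonzero component along $g_0$, and projecting the equation onto $\psi_0$ yields a scalar equation of the form $k=F(k,\e,\tau)$, where $F$ is holomorphic in $k$, jointly continuous, and satisfies $|F(k,\e,\tau)|\leqslant C\e|\ln\e|^{1/2}$ uniformly on a fixed disc (after absorbing the bounded operator $\A_2$ and inverting $I$ plus a small operator on the complementary subspace, which is legitimate for $\e$ small by the same Neumann-series reasoning). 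By the contraction mapping principle applied to $k\mapsto F(k,\e,\tau)$ on the disc $|k|\leqslant 2C\e|\ln\e|^{1/2}$, there is a unique fixed point $k_\e(\tau)$ there, it depends holomorphically on $\tau$-free data and satisfies $k_\e\to0$; one checks $\RE k_\e>0$ for the corresponding eigenvalue (this follows because the leading behaviour of $F$ is governed by $\ell_\tau(\psi_0)$ and the sign is fixed by the structure of $\A_3-\A_1$, which is the content of the asymptotics~(\ref{2.13}) proved later). Uniqueness of the fixed point on the disc, plus the fact that outside the disc $I+\A_3$ is invertible, gives exactly one eigenvalue near $\pi^2$, hence exactly one linearly independent solution of~(\ref{4.7}),~(\ref{4.2}).

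The two-solution case is handled the same way but with a $2\times2$ Schur complement: after eliminating the part of $g$ lying in the complement of $\mathrm{span}\{g_0^{(1)},g_0^{(2)}\}$, equation~(\ref{4.48}) reduces to a $2\times2$ system $k\,c = M(k,\e,\tau)c$ for the vector $c=\big((g,\psi_0^{(1)}),(g,\psi_0^{(2)})\big)$, with $M$ a $2\times2$ matrix holomorphic in $k$, continuous in $(\e,\tau)$, and of norm $O(\e|\ln\e|^{1/2})$. Nontrivial solutions correspond to roots of $\det\big(kI-M(k,\e,\tau)\big)=0$, a holomorphic function of $k$ equal to $k^2$ plus terms that are $O(\e|\ln\e|^{1/2})$-small; by Rouch\'e's theorem this has exactly two roots (counted with multiplicity) in the disc $|k|\leqslant C\e|\ln\e|^{1/2}$, and for small $\e$ one argues they are distinct with positive real part (this is where one uses that the relevant $2\times2$ "effective matrix" — built from $L_\tau$ and $L'_\tau$ — is nondegenerate thanks to assumptions~(\ref{2.8a}), (\ref{2.11}); the precise splitting is what later produces the two different scales $\e^4$ and $\e^8\ln^2\e$ in~(\ref{2.14}), (\ref{2.15})). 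Counting roots then gives exactly two linearly independent solutions, matching the multiplicity two of the virtual level. The main obstacle is the bookkeeping in the Schur-complement reduction: one must verify that inverting $I+\A_3$ restricted to the complementary subspace is uniformly valid in $\tau$ and that the resulting finite-dimensional equation retains holomorphy in $k$ and the claimed smallness; also ensuring $\RE k_\e>0$ (so that the root genuinely yields a point of $\spec(\He(\tau))$ rather than a spurious resonance) requires knowing the sign of the leading coefficient, which is really the substance deferred to the asymptotic-expansion parts of the proof. All of these steps are exactly the ones carried out in \cite{MSb06}, which is why the lemma can be asserted by reference.
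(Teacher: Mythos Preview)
Your proposal is correct and follows the same route the paper invokes: the paper's own ``proof'' of Lemma~\ref{lm4.1} is literally the single sentence ``Then arguing as in \cite{MSb06}, one can prove the following lemma,'' and what you have written is precisely the Neumann-series/Schur-complement/Rouch\'e machinery that \cite{MSb06} supplies. The only overreach is that you fold into the lemma two claims the paper deliberately postpones: distinctness of the two roots $k_\e^{(1)}\neq k_\e^{(2)}$ and the sign $\RE k_\e>0$ are \emph{not} part of Lemma~\ref{lm4.1} as stated --- the paper establishes both only at the very end of Section~IV.C, after the formal asymptotics have been computed and justified --- so you should strip those remarks out of the lemma's proof and leave them where they belong.
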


As we see, the first part of Theorem~\ref{th2.3} is proven and it remains to study the case when problem (\ref{2.8}) has nontrivial bounded generalized solution(s). In the next subsection we construct the asymptotic expansions for aforementioned values of $k_\e$ and we shall show that the associated nontrivial solution to (\ref{4.7}), (\ref{4.2}) is an eigenfunction of $\He(\tau)$.

\subsection{Asymptotics}

In this subsection we construct asymptotic expansions for values $k_\e$, for which problem (\ref{4.7}), (\ref{4.2}) has a nontrivial solution. Once we know the asymptotics, we shall check whether $\RE k_\e>0$, and if so, in accordance with (\ref{4.2}), the associated nontrivial solution to (\ref{4.7}) is an eigenfunction and $\pi^2-k_\e^2$ is an eigenvalue of $\He(\tau)$.

First we construct the asymptotics formally and then we shall discuss the justification, i.e., how to estimate the error terms. The formal construction is based on the combination of the method of matching asymptotic expansions \cite{Il} and the approach suggested in \cite{PMA15}.

We shall demonstrate the formal construction for the case when problem (\ref{2.8}) has two nontrivial solutions satisfying (\ref{2.10}). The case of one nontrivial solution is simpler and can be treated in the same way.

In accordance with Lemma~\ref{lm4.1}, there are two linearly independent nontrivial solutions $\psi_\e^{(j)}$, $j=1,2$,  to problem (\ref{4.7}), (\ref{4.2}) associated with one or two values $k_\e^{(j)}\to0$, $j=1,2$. By analogy with \cite[Lm. 3.1]{RJMP15}, one can make sure that there exists a unitary $2\times2$ matrix with entries $a_{ij}=a_{ij}(\tau)$ such that the functions $\Psi_0^{(i)}(x,\tau)=a_{i1}(\tau)\psi_0^{(1)}(x)+a_{i2}(\tau)\psi_0^{(2)}(x)$ satisfy (\ref{2.10}), (\ref{2.11}) and
\begin{equation}\label{2.12}
\ell_\tau(\Psi_0^{(1)})\not=0,\quad \ell_\tau(\Psi_0^{(2)})=0\quad\text{for each}\quad \tau\in \left[-\frac{\pi}{d},\frac{\pi}{d}\right).
\end{equation}

 We assume the following ans\"atzes for $k_\e^{(j)}$ and $\psi_\e^{(j)}$:
\begin{align}
&k_\e^{(j)}(\tau)=\e^2 k_2^{(j)}(\tau)+\e^4( k_4^{(j)}(\tau)+k_{4,1}^{(j)}(\tau)\ln\e)+\ldots,\label{4.8a}
\\
&\psi_\e^{(j)}(x,\tau)=\Psi_0^{(j)}(x)+\e^2 \Psi_2^{(j)}(x,\tau)+\e^4\big(\Psi_4^{(j)}(x,\tau) +\Psi_{4,1}^{(j)}(x,\tau)\ln\e\big)+\ldots\label{4.8b}
\end{align}
The latter ans\"atz will be used outside a small neighbourhood of $\g_\e^\pm$, and in what follows, this ans\"atz will be referred to as external expansion. Hereinafter by ``\ldots'' we indicate lower order terms.

We substitute (\ref{4.8a}), (\ref{4.8b}) into (\ref{4.7}), equate the coefficients at the like powers of $\e$, and replace $\g_\e^\pm$ by $\g_0^\pm$. It leads us to the boundary value problems for $\Psi_2^{(j)}$, $\Psi_4^{(j)}$, $\Psi_{4,1}^{(j)}$:
\begin{gather}
 \big(-\D+V-\pi^2\big)\Psi_2^{(j)}=0 \quad\text{in}\quad\Pi,\qquad\Psi_2^{(j)}=0 \quad\text{on}\quad \p\Pi\setminus(\g_0^+\cup\g_0^-),\label{4.9}
\\
\big(-\D+V-\pi^2\big)\Psi_{4,1}^{(j)}=0 \quad\text{in}\quad\Pi,\qquad\Psi_{4,1}^{(j)}=0 \quad\text{on}\quad \p\Pi\setminus(\g_0^+\cup\g_0^-),\label{4.9a}
\\
\begin{gathered}
  \big(-\D+V-\pi^2\big)\Psi_4^{(j)}+(k_2^{(j)})^2\Psi_0^{(j)} =0 \quad\text{in}\quad\Pi,
  \\
   \Psi_4^{(j)}=0 \quad\text{on}\quad \p\Pi\setminus(\g_0^+\cup\g_0^-),
\end{gathered}\label{4.10}
\end{gather}

Let us describe the behavior of functions $\Psi_2^{(j)}$, $\Psi_4^{(j)}$, $\Psi_{4,1}^{(j)}$ at infinity. In order to do it, we employ the approach suggested in \cite{PMA15}.

In $\Pi^\pm$, equation (\ref{4.7}) becomes equation (\ref{4.3}) with $g=0$ and thus,
\begin{equation}\label{4.13}
\Psi_\e^{(j)}(x,\tau)=\sum\limits_{p=1}^{\infty} b_p^{(j,\pm)}(\e,\tau) e^{\mp s_p(k_\e^{(j)})x_2}\sin\pi p (x_2-a_\pm),
\end{equation}
where $b_p^{(j,\pm)}(\e,\tau)$ are some constants. Similar to (\ref{4.8a}), we assume that
\begin{equation}\label{4.14}
b_p^{j,\pm}(\e,\tau)=b_{p,0}^{(j,\pm)}(\tau)+\e^2 b_{p,2}^{(j,\pm)}(\tau) + \e^4(b_{p,4}^{(j,\pm)}(\tau)+b_{p,4,1}^{(j,\pm)}(\tau)\ln\e) + \ldots,
\end{equation}
where $b_{p,0}^{j,\pm}$ are the coefficients for $\Psi_0^{(j)}$ in the expansion (\ref{4.13}) with $k_\e=0$, $\e=0$. In particular, $b_{1,0}^{j,\pm}=c_{0,\pm}^{(j)}$, see (\ref{2.10}).

We substitute (\ref{4.14}) and (\ref{4.8a}), (\ref{4.8b}) into (\ref{4.13}) and expand the right hand side into asymptotic series as $\e\to+0$ for each fixed $x$. Then we compare the coefficients at the like powers of $\e$ and $\ln\e$ in both sides of the obtained identity:
\begin{align}
\label{4.15}
&
\begin{aligned}
\Psi_2^{(j)}(x,\tau)= &(b_{1,2}^{(j,\pm)}(\tau)\mp b_{1,0}^{(j,\pm)}(\tau)  k_2^{(j)}(\tau)x_2) \sin\pi (x_1-a_\pm)
\\
&+\sum\limits_{p=2}^{\infty}e^{\mp s_p(0)x_2}b_{p,2}^{(j,\pm)}(\tau)\sin\pi p(x_1-a_\pm),
\end{aligned}
\\
&
 \begin{aligned}
 \Psi_{4,1}^{(j)}(x,\tau)= &(b_{1,4,1}^{(j,\pm)}(\tau) \mp b_{1,0}^{(j,\pm)}(\tau) k_{4,1}^{(j)}(\tau)x_2) \sin\pi (x_1-a_\pm)
 \\
 & +\sum\limits_{p=2}^{\infty}e^{\mp s_p(0)x_2}b_{p,4,1}^{(j,\pm)}(\tau)\sin\pi p(x_1-a_\pm),
 \end{aligned}\label{4.16}
\\
&
\begin{aligned}
\Psi_4^{(j)}(x,\tau)=& \bigg(b_{1,4}^{(j,\pm)}(\tau)\mp x_2(k_2^{(j)}b_{1,2}^{(j,\pm)}(\tau)+k_4^{(j)}(\tau)b_{1,0}^{(j,\pm)}(\tau))
\\
&+ \frac{1}{2} (k_2^{(j)})^2 x_2^2 b_{1,0}^{(j,\pm)}(\tau)\bigg)  \sin\pi (x_1-a_\pm)
\\
&+\sum\limits_{p=2}^{\infty}e^{\mp s_p(0)x_2}\left(b_{p,4}^{(j,\pm)}(\tau)\mp\frac{1}{2 s_p(0)} k_2^{(j)}(\tau) x_2 b_{p,0}^{(j,\pm)}(\tau) \right)\sin\pi p(x_1-a_\pm),
\end{aligned}\label{4.22}
\end{align}
as $\pm x_2>x_2^\infty$. The above formulae describe the desired behavior at infinity for the coefficients of the external expansion.

At points $\g_0^\pm$,  functions $\Psi_2^{(j)}$, $\Psi_4^{(j)}$, $\Psi_{4,1}^{(j)}$ are  to have singularities. The reason is that in the vicinity of $\g_\e^\pm$ the asymptotics for $\Psi_\e^{(j)}$ is constructed as an internal expansion. And in accordance with the method of matching asymptotic expansions, the external and internal expansions are to be matched \cite{Il}. It generates the singularities for functions $\Psi_2^{(j)}$, $\Psi_4^{(j)}$, $\Psi_{4,1}^{(j)}$, which we apriori introduce right now:
\begin{align}
&
 \begin{aligned}
\Psi_2^{(j)}(x,\tau)=&\a_{1,\pm}^{(j,2)}(\tau) r_\pm^{-1} \sin \theta_\pm  +
\a_{2,\pm}^{(j,2)}(\tau) r_\pm\sin\theta_\pm
\\
&+ \frac{\a_{1,\pm}^{(j,2)(\tau)}(V(0)-\pi^2)}{2} r_\pm \ln r_\pm\sin\theta_\pm  + \ldots,
 \end{aligned}\label{4.11}
\\
&\Psi_{4,1}^{(j)}(x,\tau)=\a_{1,\pm}^{(j,4,1)}(\tau) r_\pm^{-1} \sin \theta_\pm  + \ldots,\label{4.12a}
\\
&
\Psi_4^{(j)}(x,\tau)=\a_{1,\pm}^{(j,4)}(\tau) r_\pm^{-3} \sin 3\theta_\pm +  \a_{2,\pm}^{(j,4)}(\tau) r_\pm^{-2} \sin 2\theta_\pm +   \a_{3,\pm}^{(j,4)}(\tau) r_\pm^{-1} \sin \theta_\pm \ldots, 
 \label{4.12b}
\end{align}
as $r_\pm\to0$, where $\a_{p,\pm}^{(j,q)}$, $\a_{1,\pm}^{(j,4,1)}$ are to be determined. In order to do it, we first observe that functions $\Psi_0^{(j)}$ satisfy an asymptotics similar to (\ref{4.11}):
\begin{align*}
\Psi_0^{(j)}(x,\tau)=&\a_{1,\pm}^{(j,0)}(\tau)r_\pm \sin\theta_\pm + \a_{2,\pm}^{(j,0)}(\tau) r_\pm^2\sin 2\theta_\pm + \a_{3,\pm}^{(j,0)}(\tau) r_\pm^3\sin 3\theta_\pm
\\
&+ \a_{4,\pm}^{(j,0)}(\tau) r^3 \sin\theta_\pm \cos^2\theta_\pm +\ldots,\quad r_\pm\to0,
\end{align*}
where
\begin{equation}\label{4.21}
\begin{aligned}
&\a_{1,\pm}^{(j,0)}(\tau):=\mp\frac{\p\Psi_0^{(j)}}{\p x_1} (\g_0^\pm,\tau),\quad \a_{2,\pm}^{(j,0)}(\tau):=-\frac{\p^2\Psi_0^{(j)}}{\p x_1\p x_2}(\g_0^\pm,\tau),
\\
&\a_{3,\pm}^{(j,0)}(\tau):=\mp \frac{\p^3\Psi_0}{\p x_1^3}(\g_0^\pm,\tau),\quad
\a_{4,\pm}^{(j,0)}(\tau):=  \frac{V(0)-\pi^2}{2}\a_{1,\pm}^{(j,0)}(\tau).
\end{aligned}
\end{equation}
The above formula is just the Taylor expansion for $\Psi_0^{(j)}$ at $\g_0^\pm$ and some of the coefficients are determined by equation (\ref{2.8}) for $\Psi_0^{(j)}$.

We introduce the rescaled variables $\z^\pm:=(\z^\pm_1,\z^\pm_2)$, $\z^-:=(-x_2\e^{-1},x_1\e^{-1})$, $\z^+:=(x_2\e^{-1},(d-x_1)\e^{-1})$. Then in ans\"atz (\ref{4.8b}),
we replace $\Psi_2^{(j)}$, $\Psi_4^{(j)}$, $\Psi_{4,1}^{(j)}$ by their asymptotics (\ref{4.11}), (\ref{4.12a}), (\ref{4.12b}) and rewrite the result in variables $\z^\pm$:
\begin{equation*}
\Psi_\e^{(j)}(x,\tau)=\e\Phi_{1,\pm}^{(j)}(\z^\pm,\tau) +\e^2\Phi_{2,\pm}^{(j)}(\z^\pm,\tau) + \e^3\ln\e \Phi_{2,1,\pm}^{(j)}(\z^\pm,\tau)+\e^3\Phi_{3,\pm}^{(j)}(\z^\pm,\tau)+\ldots
\end{equation*}
where
\begin{equation}\label{4.25}
\begin{aligned}
&\Phi_{1,\pm}^{(j)}(\z^\pm,\tau):=\a_{1,\pm}^{(j,0)}(\tau) \rho_\pm\sin\theta_\pm + \a_{1,\pm}^{(j,2)}(\tau)\rho_\pm^{-1}\sin\theta_\pm + \a_{1,\pm}^{(j,4)}(\tau)\rho_\pm^{-3}\sin3\theta_\pm,
\\
&\Phi_{2,\pm}^{(j)}(\z^\pm,\tau):= \a_{2,\pm}^{(j,0)}(\tau) \rho_\pm^2\sin 2\theta_\pm,\quad \Phi_{2,1,\pm}^{(j)}(\z^\pm,\tau):=\frac{\a_{1,\pm}^{(j,2)}(\tau)\big(V(0)-\pi^2\big)}{2}\rho_\pm \sin\theta_\pm,
\\
&\Phi_{3,\pm}^{(j)}(\z^\pm,\tau):=\a_{3,\pm}^{(j,0)}(\tau)\rho_\pm^3\sin 3\theta_\pm + \a_{4,\pm}^{(j,0)}(\tau)\rho_\pm^3\sin^2\theta_\pm \cos\theta_\pm  + \a_{2,\pm}^{(j,2)}(\tau)\rho_\pm \sin\theta_\pm
 \\
&\hphantom{\Phi_{3,\pm}^{(j)}(\z^\pm,\tau):=} + \frac{\a_{1,\pm}^{(j,2)}(\tau)(V(0)-\pi^2)}{2}\rho_\pm \ln\rho_\pm\sin\theta_\pm+\a_{3,\pm}^{(j,4)}(\tau)\rho_\pm^{-1}\sin\theta_\pm,
\end{aligned}
\end{equation}
where $\rho_\pm:=|\z^\pm|$. In accordance with the method of matching asymptotic expansions, the above identities yield that the internal expansions for $\psi_\e^{(j)}$ in the vicinities of $\g_\e^\pm$ should read as
\begin{equation}\label{4.26}
\psi_\e^{(j)}(x,\tau)=\e\phi_{1,\pm}^{(j)}(\z^\pm,\tau) + \e^2\phi_{2,\pm}^{(j)}(\z^\pm,\tau) + \e^2\ln\e\,\phi_{2,1,\pm}^{(j)}(\z^\pm,\tau)+ \e^3\phi_{3,\pm}^{(j)}(\z^\pm,\tau) +\ldots
\end{equation}
and its coefficients should satisfy the asymptotics
\begin{equation}\label{4.27}
\phi_{\natural,\pm}^{(j)}(\z,\tau)=\Phi_{\natural,\pm}^{(j)}(\z,\tau) + \ldots,\quad \z\to\infty,\quad \z_2>0,\qquad \z=(\z_1,\z_2).
\end{equation}

We substitute (\ref{4.26}), (\ref{4.8a}), (\ref{4.8b}) into (\ref{4.7}), (\ref{2.3a}), pass to variables $\z_\pm$, and equate the coefficients at the like powers of $\e$ and $\ln\e$ that gives the boundary value problems
\begin{equation}\label{4.28}
\begin{aligned}
&\D_\z \phi_{\natural,\pm}^{(j)}=0,\quad \z_2>0,\qquad \phi_{\natural,\pm}^{(j)}\big|_{\G_1}=0,
\\
&\phi_{\natural,+}^{(j)}\big|_{\g_1}= e^{\iu \tau d}\phi_{\natural,-}^{(j)}\big|_{\g_1},\qquad
\frac{\p\phi_{\natural,+}^{(j)}}{\p\z_2}\bigg|_{\g_1}= -e^{\iu \tau d}\frac{\p\phi_{\natural,-}^{(j)}}{\p\z_2}\bigg|_{\g_1},
\\
&\g_1:=\{\z:\, |\z_1^\pm|<1,\, \z_2^\pm=0\},\quad \G_1:=O\z_1\setminus\overline{\g_1},
\end{aligned}
\end{equation}
for $\natural=1$, $\natural=2$, $\natural=2,1$, and the problem
\begin{align*}
&\D_\z \phi_{3,\pm}^{(j)}=(V(0)-\pi^2)\phi_{1,\pm}^{(j)},\quad \z_2>0,\qquad \phi_{3,\pm}^{(j)}\big|_{\G_1}=0,
\\
&\phi_{3,+}^{(j)}\big|_{\g_1}= e^{\iu \tau d}\phi_{3,-}^{(j)}\big|_{\g_1},\qquad
\frac{\p\phi_{3,+}^{(j)}}{\p\z_2}\bigg|_{\g_1}= -e^{\iu \tau d}\frac{\p\phi_{3,-}^{(j)}}{\p\z_2}\bigg|_{\g_1}.
\end{align*}

Equation in (\ref{4.28}) for $\phi_{1,\pm}^{(j)}$ has an explicit solution satisfying the boundary condition on $\G_1$:
\begin{equation}\label{4.30}
\phi_{1,\pm}^{(j)}(\z,\tau)=\b_{1,\pm}^{(j,1)}(\tau) \z_2 + \b_{1,\pm}^{(j,2)}(\tau) \IM \sqrt{z^2-1},\quad z=\z_1+\iu\z_2,
\end{equation}
where $\b_{1,\pm}^{(j,p)}$ are some constants and the branch of the square root is fixed by the restriction $\sqrt{1}=1$. To satisfy the desired boundary conditions on $\g_1$, the above constants should satisfy the identities:
\begin{equation}\label{4.31}
\b_{1,+}^{(j,2)}=e^{\iu\tau d} \b_{1,-}^{(j,2)},\quad
\b_{1,-}^{(j,1)}=-e^{\iu\tau d} \b_{1,-}^{(j,1)}.
\end{equation}
Extra two identities appear by comparing (\ref{4.30}) and asymptotics (\ref{4.27}), (\ref{4.25}):
\begin{equation}\label{4.32}
\a_{1,\pm}^{(j,0)}=\b_{1,\pm}^{(j,1)}+\b_{1,\pm}^{(j,2)}.
\end{equation}
These identities and (\ref{4.31}) yield the formulae for  constants $\b_{1,\pm}^{(j,p)}$:
\begin{equation}\label{4.33}
\begin{aligned}
&\b_{1,-}^{(j,1)}=\frac{\a_{1,-}^{(j,0)}-\a_{1,+}^{(j,0)}e^{-\iu\tau d}}{2},\qquad \b_{1,+}^{(j,1)}=\frac{\a_{1,+}^{(j,0)}-\a_{1,-}^{(j,0)}e^{\iu\tau d}}{2},
\\
&\b_{1,-}^{(j,2)}=\frac{\a_{1,-}^{(j,0)}+\a_{1,+}^{(j,0)}e^{-\iu\tau d}}{2},\qquad \b_{1,+}^{(j,2)}=\frac{\a_{1,+}^{(j,0)}+\a_{1,-}^{(j,0)}e^{\iu\tau d}}{2}.
\end{aligned}
\end{equation}
Formulae (\ref{4.30}) and (\ref{4.27}) determine also $\a_{1,\pm}^{(j,2)}$ and $\a_{1,\pm}^{(j,4)}$:
\begin{equation}\label{4.34}
\a_{1,\pm}^{(j,2)}=\frac{\b_{1,\pm}^{(j,2)}}{2},\quad \a_{1,\pm}^{(j,4)}=\frac{3 \b_{1,\pm}^{(j,2)}}{8}.
\end{equation}
Function $\phi_{2,1,\pm}^{(j)}$ is found exactly in the same way:
\begin{equation*}
\phi_{2,1\pm}^{(j)}(\z,\tau)=\frac{(V(0)-\pi^2)}{4} \big(\b_{2,1,\pm}^{(j,1)}(\tau)\z_2+ \b_{2,1,\pm}^{(j,2)}(\tau)\IM\sqrt{z^2-1}\big),
\end{equation*}
where
\begin{align*}
&\b_{2,1,-}^{(j,1)}:=\a_{1,-}^{(j,2)}-\a_{1,+}^{(j,2)}e^{-\iu\tau d}, &&  \b_{2,1,+}^{(j,1)}:= \a_{1,+}^{(j,2)}-\a_{1,-}^{(j,2)}e^{\iu\tau d},
\\
&\b_{2,1,-}^{(j,2)}=\a_{1,-}^{(j,2)}+\a_{1,+}^{(j,2)}e^{-\iu\tau d}, &&
\b_{2,1,+}^{(j,2)}:=\a_{1,+}^{(j,2)}+\a_{1,-}^{(j,2)}e^{\iu\tau d}.
\end{align*}
Function $\phi_{2,\pm}^{(j)}$ can also be written explicitly, namely,
\begin{equation*}
\phi_{2,\pm}^{(j)}(\z,\tau)=2\a_{2,\pm}^{(j,1)}(\tau)\z_2^2+ \b_{2,\pm}^{(j,2)}(\tau)\z_2\IM z\sqrt{z^2-1},
\end{equation*}
where
\begin{equation}\label{4.35a}
\begin{aligned}
&\b_{2,-}^{(j,1)}=\frac{\a_{2,-}^{(j,0)}-\a_{2,+}^{(j,0)}e^{-\iu\tau d}}{2},\qquad \b_{2,+}^{(j,1)}=\frac{\a_{2,+}^{(j,0)}-\a_{2,-}^{(j,0)}e^{\iu\tau d}}{2},
\\
&\b_{2,-}^{(j,2)}=\frac{\a_{2,-}^{(j,0)}+\a_{2,+}^{(j,0)}e^{-\iu\tau d}}{2},\qquad \b_{2,+}^{(j,2)}=\frac{\a_{2,+}^{(j,0)}+\a_{2,-}^{(j,0)}e^{\iu\tau d}}{2}.
\end{aligned}
\end{equation}
In particular, it determines $\a_{2,\pm}^{(j,4)}$:
\begin{equation}\label{4.35b}
\a_{2,\pm}^{(j,4)}=\frac{\b_{2,\pm}^{(j,2)}}{8}.
\end{equation}

And finally, we are able to find explicitly $\phi_{3,\pm}^{(j)}$:
\begin{align*}
\phi_{3,\pm}^{(j)}(\z,\tau)=& \b_{3,\pm}^{(j,1)}(\tau)\IM z^3+ \b_{3,\pm}^{(j,2)}(\tau) \IM\left((\sqrt{z^2-1})^3+\frac{3}{2}z\right)
\\
& +
\frac{V(0)-\pi^2}{2} \bigg( \b_{1,\pm}^{(j,1)}(\tau) \z_1^2\z_2+\frac{\b_{1,\pm}^{(j,2)}(\tau)}{2}\IM z^3
\\
&\hphantom{+
\frac{V(0)-\pi^2}{2} \bigg(}- \frac{\b_{1,\pm}^{(j,2)}(\tau)}{2}\z_2 \RE \Big(z\sqrt{z^2-1}-\ln\frac{z+\sqrt{z^2-1}}{2}-\frac{1}{2}\Big) \bigg)
\\
&
+ \b_{3,\pm}^{(j,3)}(\tau) \z_2 + \b_{3,\pm}^{(j,4)}(\tau) \IM \sqrt{z^2-1},
\end{align*}
where $\b_{3,\pm}^{(j,p)}$ are some constants. And as for function $\phi_{1,\pm}^{(j)}$, coefficients $\b_{3,\pm}^{(j,p)}$ are determined by the boundary conditions on $\g_1$
and asymptotics (\ref{4.27}), (\ref{4.28}):
\begin{equation}\label{4.37a}
\begin{aligned}
&\b_{3,-}^{(j,1)}=\frac{\a_{3,-}^{(j,0)}-\a_{3,+}^{(j,0)}e^{-\iu\tau d}}{2},\qquad \b_{3,+}^{(j,1)}=\frac{\a_{3,+}^{(j,0)}-\a_{3,-}^{(j,0)}e^{\iu\tau d}}{2},
\\
&\b_{3,-}^{(j,2)}=\frac{\a_{3,-}^{(j,0)}+\a_{3,+}^{(j,0)}e^{-\iu\tau d}}{2},\qquad \b_{3,+}^{(j,2)}=\frac{\a_{3,+}^{(j,0)}+\a_{3,-}^{(j,0)}e^{\iu\tau d}}{2},
\\
&\b_{3,-}^{(j,3)}=\frac{\a_{2,-}^{(j,2)}-\a_{2,+}^{(j,2)}e^{-\iu\tau d}}{2},\qquad \b_{3,+}^{(j,3)}=\frac{\a_{2,+}^{(j,2)}-\a_{2,-}^{(j,2)}e^{\iu\tau d}}{2},
\\
&\b_{3,-}^{(j,4)}=\frac{\a_{2,-}^{(j,2)}+\a_{2,+}^{(j,2)}e^{-\iu\tau d}}{2},\qquad \b_{3,+}^{(j,4)}=\frac{\a_{2,+}^{(j,2)}+\a_{2,-}^{(j,2)}e^{\iu\tau d}}{2}.
\end{aligned}
\end{equation}
It yields
\begin{equation}\label{4.38a}
\a_{3,\pm}^{(j,4)}=\frac{4\b_{3,\pm}^{(j,4)}-3\b_{3,\pm}^{(j,2)}}{8}.
\end{equation}

Thus, for the coefficients of the external expansion we have problems (\ref{4.9}), (\ref{4.9a}), (\ref{4.10}), (\ref{4.15}), (\ref{4.16}), (\ref{4.22}), (\ref{4.11}), (\ref{4.12a}), (\ref{4.12b}). To study the solvability of these problems, we shall make use of the following auxiliary lemma.

\begin{lemma}\label{lm4.3}
Let a function $f\in L_2(\Pi)$ be compactly supported in $\overline{\Pi}$. Then the problem
\begin{equation}\label{4.39}
(-\D+V-\pi^2)u=f\quad\text{in}\quad\Pi,\qquad u=0\quad\text{on}\quad\p\Pi,
\end{equation}
has a generalized solution bounded at infinity if and only if
\begin{equation}\label{4.40}
\int\limits_{\Pi} f\overline{\Psi_0}=0.
\end{equation}
This solution is unique up to an additive term $C_1\Psi_0^{(1)}+C_2\Psi_0^{(2)}$, $C_1$, $C_2$ are arbitrary constants.
\end{lemma}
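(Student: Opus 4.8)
The plan is to establish Lemma~\ref{lm4.3} as a Fredholm-type solvability statement for the operator $-\D+V-\pi^2$ on $\Pi$ with Dirichlet conditions, and to identify its cokernel with the span of the resonance solutions $\Psi_0^{(1)},\Psi_0^{(2)}$. First I would reduce the problem to the operator equation already prepared in Subsection~\ref{Subsect4.1}. For $f\in L_2(\Pi)$ compactly supported in $\overline{\Pi}$, and writing $f=f_0$ with $\supp f_0\subset\Pi^0$ (shrinking the cutoffs $\Pi^\pm$ if necessary so the support lies in $\Pi^0$), problem (\ref{4.39}) with the boundedness requirement at infinity is, by the construction leading to (\ref{4.6}) taken at $k=0$, equivalent to $(I+\A_1(0))g=f$ in $L_2(\Pi^0)$, the bounded solution $u$ being recovered from $g$ via (\ref{4.5}) with $k=0$. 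Here one uses that for $k=0$ the separated-variables solutions $v^\pm$ built from $G_j^\pm(x,t,0)$ stay bounded (the $j=1$ mode is linear in $x_1$ but bounded in $x_2$, the $j\ge2$ modes decay), so "bounded at infinity" on the $u$ side corresponds exactly to solvability of the $g$-equation with no growing mode switched on.

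Next I would invoke the spectral information about $I+\A_1(0)$ already recorded before (\ref{4.7}): $\A_1(0)$ is compact, so $I+\A_1(0)$ is Fredholm of index zero; its kernel is spanned by the functions $g_0^{(1)},g_0^{(2)}$ generating $\Psi_0^{(1)},\Psi_0^{(2)}$ via (\ref{4.5}), and accordingly its range is the orthogonal complement (in $L_2(\Pi^0)$) of a two–dimensional space. The task is then to show that this range is cut out precisely by the single scalar condition (\ref{4.40}) interpreted for both resonance functions, i.e. that $f$ (continued by $0$ to $\Pi^0$) lies in the range iff $\int_\Pi f\overline{\Psi_0^{(1)}}=\int_\Pi f\overline{\Psi_0^{(2)}}=0$. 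To get this I would compute $(f,\psi)_{L_2(\Pi^0)}$ against elements $\psi$ of $\ker(I+\A_1(0)^*)$ and relate it by an integration-by-parts (Green's formula on $\Pi$, using that $\Psi_0^{(j)}$ and the candidate solution $u$ both satisfy the Dirichlet condition and that $f$ is compactly supported, so all boundary terms at infinity vanish because the surviving modes there are the explicit $\sin\pi(x_1-a_\pm)$ ones and the pairing of a bounded solution with a bounded resonance produces no boundary contribution at $x_2\to\pm\infty$) to $\int_\Pi f\overline{\Psi_0^{(j)}}$. Concretely, if $u$ is any bounded generalized solution of (\ref{4.39}), then $\int_\Pi f\overline{\Psi_0^{(j)}}=\int_\Pi\big((-\D+V-\pi^2)u\big)\overline{\Psi_0^{(j)}}=\int_\Pi u\,\overline{(-\D+V-\pi^2)\Psi_0^{(j)}}=0$, the boundary terms vanishing by the Dirichlet condition on $\p\Pi$ and by the decay/boundedness at the two outlets; this gives necessity of (\ref{4.40}). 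Sufficiency follows because the orthogonality conditions defining the range of $I+\A_1(0)$ form a two-dimensional system, the two functionals $f\mapsto\int_\Pi f\overline{\Psi_0^{(j)}}$ are linearly independent on the relevant space (the $\Psi_0^{(j)}$ being linearly independent already on any $\Pi^0$ large enough, by unique continuation), and both vanish on the range, hence they must span the annihilator of the range. Finally, uniqueness up to $C_1\Psi_0^{(1)}+C_2\Psi_0^{(2)}$ is exactly the statement $\ker(I+\A_1(0))=\mathrm{span}\{g_0^{(1)},g_0^{(2)}\}$ transported through (\ref{4.5}); one must only check that a bounded solution of the homogeneous problem cannot contain a nonzero multiple of the linear-in-$x_1$ mode $\sin\pi(x_1-a_\pm)\cdot x_2$, which would be unbounded, so it is a genuine resonance solution and thus a combination of $\Psi_0^{(1)},\Psi_0^{(2)}$.

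The main obstacle I expect is the boundary-term bookkeeping at the two straight outlets: "bounded at infinity" is weaker than decay, so Green's formula on the truncated domain $\Pi\cap\{|x_2|<R\}$ produces flux integrals over $\{x_2=\pm R\}$ that must be shown to vanish as $R\to\infty$. This is where one uses that in $\Pi^\pm$ both $u$ and $\Psi_0^{(j)}$ solve $(-\D-\pi^2)\cdot=0$ and hence admit expansions (\ref{4.13})-type with only the $p=1$ term non-decaying; a bounded solution forces the coefficient of the growing mode to vanish, leaving $u\sim(\text{const})\sin\pi(x_1-a_\pm)+O(e^{-\sqrt3\pi|x_2|})$, and the Wronskian-type boundary integral of two such functions on $\{x_2=\pm R\}$ is then $R$-independent plus exponentially small, and in fact equals zero because the two leading terms are both of the form $(\text{const})\sin\pi(x_1-a_\pm)$ with no conjugate exponential factor — their normal-derivative pairing telescopes to zero. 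Once this vanishing is pinned down, the rest is the standard Fredholm/duality argument, and I would present it by citing the structure of $I+\A_1(0)$ from Subsection~\ref{Subsect4.1} rather than redoing it.
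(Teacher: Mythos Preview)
Your proposal is correct and follows essentially the same route as the paper: the paper's proof consists of a single sentence saying that one rewrites problem (\ref{4.39}) as the operator equation (\ref{4.6}) at $k=0$ and then proceeds as in the proof of Lemma~4.7 in \cite{JMP11}, which is precisely the Fredholm/duality argument you sketch. Your treatment of the boundary terms at the outlets (both $u$ and $\Psi_0^{(j)}$ reduce to a constant multiple of $\sin\pi(x_1-a_\pm)$ plus exponentially small tails, so the flux pairing vanishes as $R\to\infty$) and your dimension-count for sufficiency are the right details to fill in, and they match what the cited lemma supplies.
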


The proof of this lemma is based on rewriting problem (\ref{4.39}) to equation (\ref{4.6}) and proceeding then as in the proof of Lemma~4.7 in \cite{JMP11}.

Let us study the solvability of the problem of $\Psi_2^{(j)}$. By $\chi_{3,\pm}=\chi_{3,\pm}(x)$ we denote infinitely differentiable cut-off functions equalling one in a small neighborhood of points $\g_0^\pm$ and vanishing outside a bigger neighborhood. By $\chi_{4,\pm}=\chi_{4,\pm}(x_2)$ we denote infinitely differentiable cut-off functions equalling one for $\pm x_2>x_2^\infty+1$ and vanishing as $\pm x_2<x_2^\infty$.

We construct function $\Psi_2^{(j)}$  as $\Psi_2^{(j)}=\widetilde{\Psi}_2^{(j)}+\widehat{\Psi}_2^{(j)}$,
\begin{align*}
\widetilde{\Psi}_2^{(j)}(x,\tau)=&\a_{1,-}^{(j,2)}(\tau) \chi_{3,-}(x) r_-^{-1}\sin\theta_- + \a_{1,+}^{(j,2)}(\tau) \chi_{3,+}(x) r_+^{-1}\sin\theta_+
\\
&+ \big(b_{1,0}^{(j,-)}(\tau)\chi_{4,-}(x_2) - b_{1,0}^{(j,+)}(\tau)\chi_{4,+}(x_2)\big) x_2\sin\pi(x_1-a_\pm).
\end{align*}
Then for $\widehat{\Psi}_2^{(j)}$ we obtain problem (\ref{4.39}) with $f^{(j)}=(\D-V+\pi^2)\widetilde{\Psi}_2^{(j)}$. We write solvability condition (\ref{4.40}) for such $f$ as
\begin{equation*}
\lim\limits_{\genfrac{}{}{0 pt}{}{R\to+\infty}{\d\to+0}} \int\limits_{\Pi_{R,\d}} f\overline{\Psi}_0^{(p)}\di x=0,\quad \Pi_{R,\d}
:=\{x:\, |x_2|<R,\, |x-\g_0^\pm|>\d\},\quad p=1,2.
\end{equation*}
Integrating twice by parts in the above integrals and passing to the limit as $R\to+\infty$, $\d\to+0$, we arrive at the formulae
\begin{equation*}
k_2^{(j)}\d_{jp}=2\pi\left(\a_{1,-}^{(j,2)}\overline{\a_{1,-}^{(p,0)}} + \a_{1,+}^{(j,2)}\overline{\a_{1,+}^{(p,0)}} \right).
\end{equation*}
Here we have also employed normalization conditions (\ref{2.11}). Substituting identities (\ref{4.32}), (\ref{4.33}), (\ref{4.21}), we finally obtain formulae for $k_2^{(j)}$:
\begin{equation}\label{4.42}
\begin{aligned}
k_2^{(j)}(\tau)=&2\pi (\b_{1,-}^{(j,2)}(\tau)\overline{\a_{1,-}^{(j,0)}(\tau)} + \b_{1,-}^{(j,2)}(\tau)\overline{\a_{1,+}^{(j,0)(\tau)}e^{-\iu\tau d}})
\\
=&\pi\left|\frac{\p\Psi_0^{(j)}}{\p x_1}(\g_0^-,\tau)- \frac{\p\Psi_0^{(j)}}{\p x_1}(\g_0^+,\tau)e^{-\iu\tau d}\right|^2.
\end{aligned}
\end{equation}
It follows from the above formula, (\ref{2.12}), and the unitarity of matrix $(a_{ij})$ that
\begin{equation}\label{4.45a}
k_2^{(1)}=\pi|L_\tau|^2\not=0,\quad k_2^{(2)}=0\quad \text{for each}\quad \tau\in\left[-\frac{\pi}{d},\frac{\pi}{d}\right).
\end{equation}

Function  $\Psi_2^{(j)}$ reads as
\begin{equation}\label{4.42a}
\Psi_2^{(j)}=\breve{\Psi}_2^{(j)} + C_1^{(j)}\Psi_0^{(1)}  + C_2^{(j)}\Psi_0^{(2)},
\end{equation}
where $\breve{\Psi}_2^{(j)}$ is the partial solution to (\ref{4.9})  fixed by the restriction
\begin{equation*}
b_{1,2}^{(j,+)}\overline{b_{1,0}^{(p,+)}}  + b_{1,2}^{(j,-)}\overline{b_{1,0}^{(p,-)}}=0,\quad p,j=1,2,
\end{equation*}
and $C_1^{(j)}$, $C_2^{(j)}$ are arbitrary constants.
It follows from (\ref{4.42a}) that coefficients $\a_{2,\pm}^{(j,2)}$ in (\ref{4.11}) read as
\begin{equation}\label{4.47a}
\a_{2,\pm}^{(j,2)}=\breve{\a}_{2,\pm}^{(j,2)}  + C_1^{(j)} \a_{1,\pm}^{(j,0)} + C_2^{(j)}\a_{1,\pm}^{(j,0)},
\end{equation}
where coefficients $\breve{\a}_{2,\pm}^{(j,2)}$ come from asymptotics (\ref{4.11}) for $\breve{\Psi}_2^{(j)}$. In particular, it implies
\begin{gather}\label{4.47b}
\b_{3,\pm}^{(j,4)}=\breve{\b}_{3,\pm}^{(j,4)} + C_1^{(j)} \b_{1,\pm}^{(j,2)} + C_2^{(j)}\b_{1,\pm}^{(2,2)},
\\
\check{\b}_{3,-}^{(j,4)}=\frac{\breve{\a}_{2,-}^{(j,2)} +\breve{\a}_{2,+}^{(j,2)}e^{-\iu\tau d}}{2},\qquad \b_{3,+}^{(j,4)}=\frac{\breve{\a}_{2,+}^{(j,2)} +\breve{\a}_{2,-}^{(j,2)}e^{\iu\tau d}}{2}.
\end{gather}

The solvability of the problem for $\Psi_{4,1}^{(j)}$ is studied exactly as above and it yields the formula for $k_{4,1}^{(j)}$:
\begin{equation}\label{4.44}
k_{4,1}^{(j)}=\frac{3}{4}k_2^{(j)}.
\end{equation}

The solvability of problem (\ref{4.10}), (\ref{4.22}), (\ref{4.12b}) can be also studied as above; one just should introduce $\widetilde{\Psi}_4$ in an appropriate way, including in this function all the terms growing at infinity and all singular terms at $\g_0^\pm$. The solvability conditions of this problem and identities (\ref{4.21}), (\ref{4.33}), (\ref{4.34}), (\ref{4.35a}), (\ref{4.35b}), (\ref{4.37a}), (\ref{4.38a}), (\ref{4.47a}), (\ref{4.47b}) imply
\begin{equation}\label{4.45}
\begin{aligned}
k_4^{(j)}&\d_{jp} + k_2^{(j)} \big(C_1^{(j)}\d_{1p}+C_2^{(j)}\d_{2p}\big)
 \\
& + (k_2^{(j)})^2 \lim\limits_{R\to+\infty} \left(2\int\limits_{\Pi_R} \Psi_0^{(j)}(x)\overline{\Psi_0^{(p)}(x)}\di x - R(b_{1,0}^{(j,-)}\overline{b_{1,0}^{(p,-)}}-b_{1,0}^{(j,-)}\overline{b_{1,0}^{(p,-)}})
\right)
\\
=&\frac{\pi}{2} \left(8\breve{\a}_{3,-}^{(j,4)}\overline{\b_{1,-}^{(p,2)}} + 2 \b_{2,-}^{(j,2)}  \overline{\b_{2,-}^{(p,2)}} + 9 \b_{1,-}^{(j,2)}\overline{\b_{3,-}^{(p,2)}} + \frac{9(V(0)-\pi^2)}{4} \b_{1,-}^{(j,2)}\overline{\b_{1,-}^{(p,2)}}\right)
\\
&+4\pi (C_1^{(j)} \b_{1,-}^{(1,2)}\overline{\b_{1,-}^{(p,2)}} + C_2^{(j)} \b_{1,-}^{(2,2)}\overline{\b_{1,-}^{(p,2)}}), \quad j,p=1,2.
\end{aligned}
\end{equation}
We observe that, thanks to (\ref{2.12}), $\b_{1,-}^{(2,2)}=0$ for each $\tau\in\left[-\frac{\pi}{d},\frac{\pi}{d}\right)$. Due to this identity and (\ref{4.45a}) one can easily make sure that equations (\ref{4.45}) (w.r.t. $k_4^{(j)}$, $C_1^{(j)}$, $C_2^{(j)}$) are solvable once we let $C_1^{(1)}:=0$, $C_2^{(2)}:=0$. In particular, as $j=p=2$, we can determine $k_4^{(2)}$:
\begin{equation*}
k_4^{(2)}=\pi|\b_{2,-}^{(2,2)}|^2.
\end{equation*}
Employing (\ref{4.35a}), (\ref{4.21}) and proceeding as in the proof of Lemma~3.1 in \cite{RJMP15}, we arrive at the final formula for $k_4^{(2)}$:
\begin{equation}\label{4.51}
k_4^{(2)}(\tau)= \frac{\pi}{4}\frac{\|L_\tau\|_{\mathds{C}^2}^2\|L'_\tau\|_{\mathds{C}^2}^2 - |(L_\tau,L'_\tau)_{\mathds{C}^2}|^2}{\|L_\tau\|_{\mathds{C}^2}^2}.
\end{equation}

\subsection{Justification}

In this section we discuss the justification of the asymptotics constructed formally in the previous section. As the justification, we mean proving estimates for the error terms. The scheme of the justification is borrowed from \cite[Sect. 4]{PMA15}. As in the previous section, we dwell on the case of two non-trivial solutions to problem (\ref{2.8}). The case of one solution can be treated in the same way.

First we observe that following the lines of the previous subsection, we can construct easily complete asymptotic expansions for $k_\e^{(j)}$, $j=1,2$, as well as for the associated nontrivial solutions of problem (\ref{2.8}).

Reproducing word-by-word the arguments of \cite[Subsect. 4.1]{PMA15}, one can make sure that these formal asymptotic series provide formal asymptotic solution to problem (\ref{4.7}).

At the next step we consider the boundary value problem
\begin{equation*}
(-\D+V -\pi^2+k^2)u_\e=f\quad \text{in}\quad \Pi,\qquad u_\e=0\quad\text{on}\quad \p\Pi_\e,
\end{equation*}
with boundary conditions (\ref{2.3a}) and for $u_\e$ we assume behavior (\ref{4.2}) at infinity. Here $k$ is a complex parameter ranging in a small fixed neighborhood of zero, and $f$ is an arbitrary function in $L_2(\Pi)$ supported in $\overline{\Pi^0}$. As in Subsection~I\!V.A, the above problem can be rewritten to the operator equation
\begin{equation*}
(I+\A_3(k,\e,\tau))g=f,
\end{equation*}
where $\A_3$ is the operator introduced in (\ref{4.48}). Proceeding as in \cite[Subsect. 4.2]{PMA15}, one can show that the solution to this equation obeys the representation:
\begin{equation*}
g=\sum\limits_{j=1}^{2} Z_j \A_4\phi_0^{(j)} + \A_4 \A_2 f,
\end{equation*}
where
\begin{align*}
&\A_4:=(I+\A_2 \A_5)^{-1},\quad \A_5:=\A_3-\A_1,
\\
&
\begin{pmatrix}
Z_1
\\
Z_2
\end{pmatrix}=(kE+M)^{-1}
\begin{pmatrix}
F_1
\\
F_2
\end{pmatrix},\quad E:=
\begin{pmatrix}
1 & & 0
\\
0 &  &1
\end{pmatrix},
\\
&M:=
\begin{pmatrix}
&(\A_5 \A_4 \phi_0^{(1)},\psi_0^{(2)})_{L_2(\Pi^0)} &
(\A_5 \A_4 \phi_0^{(2)},\psi_0^{(1)})_{L_2(\Pi^0)}
\\
&(\A_5 \A_4 \phi_0^{(1)},\psi_0^{(2)})_{L_2(\Pi^0)} &
(\A_5 \A_4 \phi_0^{(2)},\psi_0^{(2)})_{L_2(\Pi^0)}
\end{pmatrix},
\\
&F_j:=(f,\psi_0^{(j)})_{L_2(\Pi^0)} - (\A_5 \A_4 \A_2 f,\psi_0^{(j)})_{L_2(\Pi^0)}.
\end{align*}
Employing this representation and following the lines of \cite[Subsect. 4.3]{PMA15}, one can get the desired estimates for the error terms.

As it was said in the beginning of Subsection~I\!V.B, if the real part of $k_\e^{(j)}$ is positive, it generates a discrete eigenvalue of $\He(\tau)$ by the rule $\l_\e^{(j)}=\pi^2-(k_\e^{(j)})^2$. It follows from (\ref{2.12}), (\ref{4.42}), (\ref{4.44}), (\ref{4.51}) that $\RE k_\e^{(j)}>0$, $j=1,2$, for sufficiently small $\e$ and hence, $\l_\e^{(j)}(\tau)=\pi^2-(k_\e^{(j)}(\tau))^2$ are the discrete eigenvalues of $\He(\tau)$ below the essential spectrum. In view of (\ref{4.8a}), the associated values $k_\e^{(j)}(\tau)$ do not coincide and hence, by Lemma~\ref{lm4.1}, both these eigenvalues are simple. The proof of Theorem~\ref{th2.3} is complete.

\begin{acknowledgments}

The work is partially supported by RFBR and by the grant of the President of Russia for young scientists-doctors of sciences (MD-183.2014.1).

\end{acknowledgments}

\end{document}